\documentclass{article}
\usepackage{amsmath}
\usepackage{amsfonts}
\usepackage{amssymb}
\usepackage{amsthm }
\usepackage[numbers]{natbib}
\usepackage{graphicx}
\usepackage{cleveref}
\usepackage{enumerate}
\usepackage{epstopdf}
\usepackage{fullpage}
\usepackage{color}
\usepackage{caption}

\newtheorem{Thm}{Theorem}[section]

\newtheorem{Con}[Thm]{Conjecture}
\newtheorem{Lem}[Thm]{Lemma}
\newtheorem{Cor}[Thm]{Corollary}
\newtheorem{Pro}[Thm]{Proposition}
\newtheorem{Ass}{Assumption}

\makeatletter
\def\blfootnote{\xdef\@thefnmark{}\@footnotetext}
\makeatother

\theoremstyle{definition}

\theoremstyle{remark}
\newtheorem*{Rem}{\bf{Remark}}

\newcommand{\wh}{\widehat}
 \def\floor#1{\left\lfloor #1 \right\rfloor}
\newcommand{\EqD}{\overset{d}{=}}

\newcommand{\ConvFDD}{\overset{f.d.d.}{\longrightarrow}}

\newcommand{\cl}{\mathcal}

\newcommand{\bb}{\mathbb}

\newcommand{\E }{\mathbb{E}}
\newcommand{\mbf}{\boldsymbol}
\newcommand{\cov}{\mathrm{Cov}}
\newcommand{\Var}{\mathrm{Var}}

\newcommand{\RV}{\mathrm{RV}}
\newcommand{\lf}{\lfloor}
\newcommand{\rf}{\rfloor}

\newcommand{\supp}{\mathrm{supp}}


\title{ Limit Theorems for Conservative Flows on  Multiple Stochastic   Integrals}
\author{Shuyang Bai\\bsy9142@uga.edu}

\begin{document}

 \maketitle

\begin{abstract}
We consider a stationary  sequence $(X_n)$   constructed by a  multiple stochastic integral  and  an   infinite-measure  conservative dynamical system. The random measure defining the multiple   integral is non-Gaussian, infinitely divisible and has a finite variance. Some additional assumptions on the dynamical system give rise to  a parameter $\beta\in(0,1)$ quantifying the conservativity of the   system. This parameter $\beta$ together with the order of the integral determines the decay rate of the covariance of $(X_n)$. The goal of the paper is to establish limit theorems for the partial sum process of $(X_n)$. We obtain a central limit theorem  with Brownian motion as limit when the covariance decays fast enough, as well as a non-central limit theorem  with fractional Brownian motion or Rosenblatt process  as limit  when the covariance decays slow enough.

\end{abstract}

\noindent \textbf{Keywords}: 
limit theorem; long-range dependence;  infinite ergodic theory; multiple stochastic integral\\
\textbf{MCS Classification (2010)}:   		60F17.

\section{Introduction}\label{Sec:Intro}

For a stationary random sequence $(X_k)$ with finite variance,  the notion \emph{long-range dependence} (or \emph{long memory}) is typically associated with  a slow power-law decay in the covariance $\cov(X_k,X_0)$ as $k\rightarrow\infty$. An  important perspective towards long-range dependence is to focus on its implication on limit theorems  (cf.\ \cite{samorodnitsky:2016:stochastic}).   In particular, if one is interested in a limit theorem for     sums, it is well-known   that when $(X_k)$ has a finite variance marginally and is  weakly dependent,   one   expects  the  following  \emph{central limit theorem}: as $n\rightarrow\infty$,
\begin{equation}\label{eq:CLT}
\left(\frac{1}{n^{1/2}}\sum_{k=1}^{\floor{nt}} X_k\right)_{t\ge 0} \Rightarrow \left(\sigma B(t)\right)_{t\ge 0},
\end{equation}
where $\Rightarrow$ stands for a suitable mode of weak convergence, e.g., weak convergence in the  Skorokhod space   $D[0,1]$,      $(B(t))_{t\ge 0}$ is the standard Brownian motion, and $\sigma^2 =\sum_k \cov(X_k,X_0)$ with the summability  often implied by the weak dependence condition imposed. On the other hand, when $(X_k)$ has long-range dependence where   $\cov(X_k,X_0)$   fails to be summable, the normalization $n^{1/2}$ in \eqref{eq:CLT} is not strong enough to stablize the   sum, and hence the central limit theorem \eqref{eq:CLT}  no longer holds. In fact in this case, one   anticipates instead a limit theorem of the form:
\begin{equation}\label{eq:nCLT}
\left(\frac{1}{n^{H}L(n)}\sum_{k=1}^{\floor{nt}} X_k\right)_{t\ge 0} \Rightarrow \left(Z(t)\right)_{t\ge 0},
\end{equation}
where $L$ is a slowly varying function (e.g. a logarithm, cf.\ \cite{bingham:goldie:teugels:1989:regular}), $H\in (1/2,1)$ is the so-called Hurst index, and $Z(t)$ is a $H$-self-similar    (i.e., $(Z(ct))_{t\ge 0}\EqD (c^H Z(t))_{t\ge 0}$, $c>0$) with stationary increments which can be Gaussian or non-Gaussian. According to \cite{samorodnitsky:2016:stochastic}, the phase transition     from \eqref{eq:CLT} to \eqref{eq:nCLT} can be viewed as an indication of the long-range dependence in $(X_k)$.

An aspect of the limit theorem in \eqref{eq:nCLT}  fundamentally different from \eqref{eq:CLT} is the absence of universality of the limit. In \eqref{eq:CLT} the limit process is always  a Brownian motion regardless of the distribution of $(X_k)$. In contrast, the limit $Z(t)$ in  \eqref{eq:nCLT} has many possibilities and often reflects some details in   $(X_k)$. A limit theorem \eqref{eq:nCLT} under  long-range dependence is often termed as a \emph{non-central limit theorem}.
A celebrated family of processes which are often found as limits  in \eqref{eq:nCLT}  are the Hermite processes  \cite{taqqu:1979:convergence,dobrushin:major:1979:non}. A standardized Hermite process can be represented as:
\begin{equation}\label{eq:Herm proc}
Z_{p,\beta}(t)= a_{p,\beta} \int_{\bb{R}^p}'  \left(\int_0^t  \prod_{j=1}^p (s-x_j)^{\beta/2-1}_+ ds \right) W(dx_1)\ldots W(dx_p),\quad t\ge 0,
\end{equation}
where $\int_{\bb{R}^p}' \cdot \ W(dx_1)\ldots W(dx_p)$ denotes the multiple Wiener-It\^o integral with $W$ being a Gaussian random measure with Lebesgue control measure, 
\begin{equation}\label{eq:beta range}
1-\frac{1}{p}<\beta<1,
\end{equation}
 and 
 \[
 a_{p,\beta}=\left( \frac{(1-p(1-\beta)/2)(1-p(1-\beta))}{p! \mathrm{B}(\beta/2,1-\beta)^p} 
\right)^{1/2} 
 \] is a constant which makes $\Var[Z_{p,\beta}(1)]=1$ (see, e.g., \cite[Proposition 4.4.2]{pipiras:2017:long}). Here $\mathrm{B}(x,y)=\int_0^1 z^{x-1}(1-z)^{y-1} dz$, $x,y>0$, denotes the beta function. The process $Z_{p,\beta}(t)$
has Hurst index $H=1-p(1-\beta)/2$, which is also known as the fractional Brownian motion when $p=1$ and the Rosenblatt process when $p=2$.

Recently, there has been some interest in studying limit theorems for a stationary  sequence $(X_k)$ constructed through embedding an infinite-measure dynamical system  in  a (multiple) stochastic integral. Here we only minimally describe the necessary ingredients and leave  the precise definitions  to Section \ref{Sec:prelim}.
 Suppose $(E,\cl{E},\mu)$ is a measure space where $\mu$ is a $\sigma$-finite infinite measure. Fix a subspace $A\in \cl{E}$ satisfying $\mu(A)<\infty$. Let $T:E\rightarrow E$ be a $\mu$-preserving transform.   Some additional ergodic-theoretic assumptions  need  to be imposed on the dynamical system $(E,\cl{E},\mu,T)$ and the set $A$. They give rise to a parameter $\beta\in (0,1)$ (it will   be identified with $\beta$  in \eqref{eq:beta range} in the context of limit theorems),  which loosely speaking, characterizes the frequency   of  the visits of the flow $\{T^n\}$ to $A$.
Suppose $M$ is a   symmetric infinitely-divisible  random measure with control measure $\mu$. For a suitable measurable function $f:E^p\rightarrow \bb{R}$ which has support within $A^p$, we can define a stationary sequence by
\begin{equation}\label{eq:X_k intro}
X_k=\int_{E^p}' f(T^k x_1 ,\ldots, T^k x_p)\  M(dx_1)\ldots M(dx_p),\quad k\in \bb{Z}_+,
\end{equation}
where the prime $'$ indicates the exclusion of the diagonals $x_i=x_j$, $i\neq j$, in the multiple integral. The strength of  the dependence  in   $(X_k)$   is controlled by the parameter $\beta$ and the order $p$, whereas the heaviness of the tails of $X_k$ is controlled by the   random measure $M$. See \cite{szulga:1991:multiple} for a general theory of multiple integrals with respect to  a symmetric infinitely-divisible random measure. We shall also provide a brief introduction  in Section \ref{sec:mult int} below.

Initiated from the work of    \citet{owada:samorodnitsky:2015:functional}, the focus had been mainly on the case where $p=1$ and $M$ (and thus $X_k$) has an infinite variance 
\cite{owada:2015:maxima, jung:2017:functional,  samorodnitsky:2016:stochastic,  owada:2016:limit, lacaux:2016:time, samorodnitsky:2019:extremal,chen:2020:extreme, chen:2020:extremal}.  Limit theorems for  sums of $(X_k)$ involving a general integral order $p$  were recently considered when $M$ is Gaussian \cite{bai:2020:limit}  and  when $M$ is infinitely divisible without a Gaussian component and has an infinite variance \cite{bai:2020:functional}. See Table \ref{tab:ref} for a crude classification of the existing results on limit theorems for sums.  

\begin{table}[h]
\centering
\begin{tabular}{|c|c|c|}
\hline 
Assumption on $M$  & $p=1$ & $p\ge 2$   \\ 
\hline 
 Infinite variance &  \cite{owada:samorodnitsky:2015:functional, jung:2017:functional}  &   \cite{bai:2020:functional} (only when $\beta>1-1/p$)  \\ 
\hline 
 Gaussian &  \cite{bai:2020:limit}  & \cite{bai:2020:limit}  \\
\hline 
Finite variance non-Gaussian & \cite{samorodnitsky:2016:stochastic}, current work &  current work (when $\beta>1-1/p$ or $p=2,\beta>1/2$)
\\ \hline
\end{tabular}
\caption{Summary of existing literature on limit theorems for sums regarding the model \eqref{eq:X_k intro}. The classification presented here is crude and does not reflect many  special assumptions and restrictions in these studies.}\label{tab:ref}
\end{table}

In this paper, we shall consider limit theorems for sums of $(X_k)$ in \eqref{eq:X_k intro} where $M$ is a finite-variance symmetric infinitely divisible  random measure without a Gaussian component.  We shall establish  a central limit theorem  when $\beta<1-1/p$ and $p\in \bb{Z}_+$ with a Brownian motion as limit, as well as a non-central limit theorem  with a Hermite process  in \eqref{eq:Herm proc} as limit  when   $\beta>1-1/p$ and $p=1$ or $2$.  We  note that a non-central limit theorem when $p=1$ has been considered in \cite[Section 9.4]{samorodnitsky:2016:stochastic} for  a type of dynamical systems constructed by null-recurrent Markov chains, for which  the proof  relies on  the infinite divisibility of the single integral and thus do not seem to apply to the case where $p= 2$. On the other hand, while the results are analogs of some of those in \cite{bai:2020:limit}; however, due to the absence of Gaussianity,   the proof techniques are different here. Our proof strategy involves first an approximation of $M$ by a compound Poisson random measure with finite moments, and  then a subtle execution  of the method of moments.   The case $\beta>1-1/p$ and $p\ge 3$, however, cannot be concluded using the method of moments, and is left as a conjecture.

The paper is organized as follows. Section \ref{Sec:prelim} prepares the background on infinite ergodic theory and multiple stochastic integrals.   Section \ref{Sec:main} contains the main results. Section \ref{Sec:proofs} includes the proofs of the main results.

\section{Preliminaries}\label{Sec:prelim}

First, we  address some notation which will be used throughout the paper. For two positive sequences $a_n$ and $b_n$, we write  $a_n\sim b_n$ to mean  $\lim_n a_n/b_n=1$.   For $a\in \bb{R}$, the notion  $\RV_\infty(a)$ denotes  the class of  positive functions defined on $\bb{Z}_+$ or $\bb{R}_+$, which are  regularly varying  with index $a$ at  infinity  (cf.\ \citet{bingham:goldie:teugels:1989:regular}).  Write $\lf x \rf=\sup\{n\in\bb{Z}:~n\le x\}$, $x\in \bb{R}$. For a function   $f:E\rightarrow \bb{R}$, we set  $\supp(f)=\{x\in E:~ f(x)\neq 0\}$.  The gamma function   is
$\Gamma(x)=\int_{0}^\infty u^{x-1} e^{-u} du$, $x>0$. For a measure space $(E,\cl{E},\mu)$, the notation $(E^p,\cl{E}^p,\mu^p)$ denotes its $p$-product  measure space, $p\in \bb{Z}_+$. For an integrable or non-negative measurable function $f$ defined on $(E,\cl{E},\mu)$, we use $\mu(f)$ to denote its integral $\int_E   f d\mu$.  For a finite ordered sequence   $I=(i_1,\ldots,i_p)\in \bb{Z}_+^p$, $i_1<\ldots<i_p$,   and   $(a_i)_{i\in \bb{Z}_+}$, we write 
$
a_I=(a_{i_1},\ldots,a_{i_p}).
$ 
This ordered sequence $I$ is at times   treated as a subset   of $\bb{Z}_+$ as well.
\subsection{Background from infinite ergodic theory}\label{sec:ergodic}

We  introduce some necessary backgrounds from the infinite ergodic theory, for which the main reference is
  \cite{aaronson:1997:introduction}.

Let $(E,\cl{E},\mu)$ be a measure space where $\mu$ is a $\sigma$-finite measure with $\mu(E)=\infty$. Suppose that $T: E\rightarrow E$ is a measure-preserving transform, namely, $T$ is measurable and $\mu(T^{-1}B)=\mu(B)$ for any $B\in \cl{E}$. We shall always make the following two basic assumptions:
\begin{itemize}
 \item  $T$ is \emph{ergodic}, namely, $T^{-1} B= B$ mod $\mu$ implies either $\mu(B)=0$ or $\mu(B^c)=0$; 
 \item $T$ is \emph{conservative}, namely, for any $B\in \cl{E}$ with $\mu(B)>0$, we have
$
\sum_{k=1}^\infty 1_B(T^k x)=\infty \ \text{ for a.e. $x\in B$}.
$
\end{itemize}
These two    assumptions  combined  are equivalent to the following statement (\cite[Proposition 1.2.2]{aaronson:1997:introduction}):
   for any  $B\in \cl{E}$   with $\mu(B)>0$, we have
\begin{equation}\label{eq:erg and cons}
\sum_{k=1}^\infty 1_{B}(T^k x)=\infty  \quad \text{ for a.e. }x\in E.
\end{equation}

The \emph{dual} (or \emph{Perron-Frobenius} or \emph{transfer}) operator $\wh{T}$ of $T$  is defined as   
\begin{equation*}
\wh{T}: L^1(\mu)\rightarrow L^1(\mu),\quad  \wh{T} f = \frac{d(\mu_f  \circ T^{-1}) }{d\mu},
\end{equation*}
where the signed measure $\mu_f(B):=\int_B f d\mu$, $B\in \cl{E}$.  The dual operator $\wh{T}$ is characterized by the dual property:
\begin{equation}\label{eq:dual}
\int_E f \cdot (g\circ T) d\mu=\int_E  (\wh{T}f) \cdot g d\mu
\end{equation}
for any  $f\in L^1(\mu)$ and $g\in L^{\infty}(\mu)$.
It is useful to note the probabilistic interpretation of $\wh{T}$: if $f$ is the  density with respect to $\mu$ of  a random element $X$ taking value in $E$,  then $\wh{T} f$ is the   density with respect to $\mu$ of the transformed random element $TX$. To see this, one can take $g$ to be an indicator function $1_B$, $B\in \cl{E}$, and then the dual property above becomes $P(TX\in B)=\int_{T^{-1}B} f d\mu=\int_B \wh{T}f d\mu $.  In addition, the characterization   in (\ref{eq:erg and cons}) has an equivalent  formulation using the dual operator (Proposition 1.3.2 \citet{aaronson:1997:introduction}):    for any non-negative $f\in L^1(E,\cl{E},\mu)$ satisfying $\mu(f)=\int_E f d\mu >0$, we have 
\begin{equation}\label{eq:erg and cons dual}
\sum_{k=1}^\infty \wh{T}^k f =\infty  \quad a.e..
\end{equation}

The following assumption was proposed in \citet{bai:2020:functional} combining the ideas from \citet{krickeberg:1967:strong} and literature on infinite ergodic theory such as  \citet{kessebohmer:2007:limit}. A similar framework  was also used in \citet{bai:2020:limit}.
 We shall adopt the following convention throughout: \emph{any function defined on a subspace 
(e.g.~$A$)  will be  extended  to the full space (e.g.~$E$) by assuming zero value outside the subspace, whenever  necessary}.

\begin{Ass}\label{ass:dyn} Let the dynamical system $(E,\cl{E},\mu,T)$ be as above. 
There exists  $A\in\cl{E}$ with $\mu(A)\in (0,\infty)$, so that  $A$ is a Polish space  with $\cl{E}_A:=\cl{E}\cap A$ being its  Borel $\sigma$-field.  In addition,  there exists a positive  rate sequence  $ (b_n)$ is regularly varying:  
\begin{equation}\label{eq:bn_RV}
(b_n)\in \RV_{\infty}(1-\beta),\quad \beta\in (0,1),
\end{equation}
  and
\begin{equation}\label{eq:uniform ret}
\lim_n b_n \wh{T}^n  g(x)=  \mu(g)=\int_E g d\mu=\int_A g d\mu \quad \text{uniformly for a.e. }x\in A 
\end{equation}
for  
 all  bounded  and $\mu$-a.e.\   continuous $g$   on $A$ (i.e., the discontinuity set of $g$ has $\mu$ measure zero),  where  for simplicity we still use $\mu$ to denote its restriction to $\cl{E}_A$. (The last equality in \eqref{eq:uniform ret}  is due to that $g$ has  its support within $A$ by the convention made before Assumption \ref{ass:dyn}.) 
\end{Ass}
Assumption \ref{ass:dyn}  implies a rate  of order $\RV_\infty(\beta)$ for  the divergence in \eqref{eq:erg and cons dual} over the subspace $A$, and hence  $\beta$ introduced in Assumption \ref{ass:dyn}  may be viewed as a parameter quantifying the conservativity of the system. The   relation \eqref{eq:uniform ret} in general cannot be extended to an arbitrary integrable function $g$ on $A$ due to the existence of weakly wandering sets \cite{hajian:1964:weakly}.   Under the assumptions imposed so far, the sequence $(b_n)$ is also related to the \emph{wandering sequence}:  
\begin{equation}\label{eq:w_n}
w_n   =\mu\left(\bigcup_{k=1}^n T^{-k} A\right) ,
\end{equation} 
through the following relation:  
\begin{equation}\label{eq:b_n w_n}
b_n\sim \Gamma(\beta)\Gamma(2-\beta) w_n 
\end{equation}
as $n\rightarrow\infty$. The relation \eqref{eq:b_n w_n} can be found in Proposition 3.1 of \citet{kessebohmer:2007:limit}. In particular, as with the current paper, ergodicity and conservativity   have been assumed throughout \citet{kessebohmer:2007:limit}, and Assumption 1 above is implied by their ``uniform return'' assumption imposed in their Definition 3.1.
\begin{Rem}
Specific examples of dynamical system $(E,A,\cl{E},\mu,T)$    satisfying Assumption 1, which involve interval maps with indifferent fixed points and  null-recurrent Markov chains,  can be found in \cite[Section 4.3]{bai:2020:functional}.  We shall omit repeating these concrete examples in this paper.  See also \cite{kessebohmer:2007:limit, gouezel:2011:correlation, melbourne:2012:operator} for more examples and theories related to Assumption \ref{ass:dyn}.
\end{Rem}

Assumption \ref{ass:dyn} implies the following ``mixing-type'' relation which essentially determines the covariance decay of $(X_k)$ in \eqref{eq:X_k intro}.  
\begin{Lem}\label{Lem:Cov Asymp}
Suppose $f_1$ and $f_2$ are   bounded and $\mu^{p}$-a.e.\ continuous functions on $A^p$ (for simplicity we still use $\mu^p$ to denote its restriction to $\cl{E}_A^p$) . Then 
\[
 \int_{E^p}f_1(T^nx_1,\ldots,T^nx_p) f_2(x_1,\ldots,x_p) \mu(dx_1)\ldots \mu(dx_p)  = \mu^{p}((f_1\circ T_p^n)\cdot f_2)  \sim   b_n^{-p} \mu^{ p}(f_1) \mu^{p}(f_2)
\]
as $n\rightarrow\infty$, where \[T_p:=T\times \ldots \times T\] is the Cartesian product transform.
\end{Lem}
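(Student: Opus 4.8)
The plan is to dualize and reduce the statement to a product version of the uniform return \eqref{eq:uniform ret}. Writing $\wh{T_p}=\wh{T}\otimes\cdots\otimes\wh{T}$ for the dual of the product transform $T_p$ and iterating the dual property \eqref{eq:dual} in each coordinate, I would first record the identity
\[
\mu^p\big((f_1\circ T_p^n)\cdot f_2\big)=\int_{A^p} f_1\cdot \big(\wh{T_p}^{\,n} f_2\big)\,d\mu^p ,
\]
where the integral may be restricted to $A^p$ since $\supp(f_1)\subseteq A^p$. Thus the assertion is equivalent to
\[
\int_{A^p} f_1\cdot \big(b_n^{p}\,\wh{T_p}^{\,n} f_2\big)\,d\mu^p \longrightarrow \mu^p(f_1)\,\mu^p(f_2),
\]
and I would obtain this by establishing the stronger pointwise claim that
\[
b_n^{p}\,\wh{T_p}^{\,n} f_2(x)\longrightarrow \mu^p(f_2)\qquad\text{uniformly for a.e.\ }x\in A^p,
\]
and then integrating against $f_1$.

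The uniform convergence above is immediate for tensor products. For $f_2=h_1\otimes\cdots\otimes h_p$ with each $h_j$ bounded and $\mu$-a.e.\ continuous on $A$, the dual operator factorizes, $\wh{T_p}^{\,n} f_2=\prod_{j=1}^p \wh{T}^{\,n} h_j$, so $b_n^{p}\wh{T_p}^{\,n} f_2=\prod_{j=1}^p\big(b_n\wh{T}^{\,n} h_j\big)$. Each factor converges to $\mu(h_j)$ uniformly for a.e.\ $x\in A$ by \eqref{eq:uniform ret} and is uniformly bounded for large $n$, so the product converges uniformly a.e.\ to $\prod_j\mu(h_j)=\mu^p(f_2)$; by linearity the same holds for finite linear combinations of tensor products, in particular for indicators $1_{R}$ of product rectangles $R=R_1\times\cdots\times R_p$ with $\mu(\partial R_j)=0$, since each $1_{R_j}$ is bounded and $\mu$-a.e.\ continuous.

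For a general bounded, $\mu^p$-a.e.\ continuous $f_2\ge 0$ (the signed case following by splitting into positive and negative parts), I would squeeze $f_2$ between two step functions built from such rectangles. Because $A$ is Polish and $\mu|_A$ is finite, $\mu|_A$ is tight, so for any $\epsilon>0$ one can localize to a compact set carrying all but $\epsilon$ of the mass and, using that the discontinuity set of $f_2$ is $\mu^p$-null, partition it by products of sets with $\mu$-null boundaries on which the oscillation of $f_2$ is small. This yields tensor-product step functions $\underline f\le f_2\le \overline f$ with $\mu^p(\overline f-\underline f)<\epsilon$. Since $\wh{T_p}$ is positive (hence monotone), one has $b_n^{p}\wh{T_p}^{\,n}\underline f\le b_n^{p}\wh{T_p}^{\,n} f_2\le b_n^{p}\wh{T_p}^{\,n}\overline f$, and the two outer quantities converge uniformly a.e.\ to $\mu^p(\underline f)$ and $\mu^p(\overline f)$, both within $\epsilon$ of $\mu^p(f_2)$; letting $\epsilon\to0$ gives the uniform convergence for $f_2$. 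I expect this Riemann-sum-type construction of the sandwiching step functions on the product of a non-compact Polish space to be the main technical obstacle, as it is exactly where tightness and the $\mu^p$-a.e.\ continuity are needed to control both the discontinuities and the tails; note that a naive $L^1$-approximation of $f_2$ would fail here, since $\wh{T_p}^{\,n}$ of an $L^1$-small function need not be uniformly small (owing to weakly wandering sets), which is precisely why the argument is carried out through monotone envelopes rather than by density.

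Finally, to pass from the uniform convergence to the integral against $f_1$, I would use monotonicity of $\wh{T_p}$ together with $|f_2|\le\|f_2\|_\infty 1_{A^p}$ to get $\big|b_n^{p}\wh{T_p}^{\,n} f_2\big|\le \|f_2\|_\infty\, b_n^{p}\wh{T_p}^{\,n} 1_{A^p}$, whose right-hand side converges uniformly a.e.\ to $\|f_2\|_\infty\mu(A)^p$ and is therefore bounded by a constant for all large $n$. Since $f_1$ is bounded and $\mu^p(A^p)<\infty$, dominated convergence then applies to $\int_{A^p} f_1\,(b_n^{p}\wh{T_p}^{\,n} f_2)\,d\mu^p$ and yields the limit $\mu^p(f_1)\mu^p(f_2)$, which is the claim.
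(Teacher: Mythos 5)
Your proposal is correct, but it takes a genuinely different route from the paper. The paper's proof is deliberately short and citation-based: it uses \eqref{eq:dual} and \eqref{eq:uniform ret} only to verify the set-level mixing relation $\mu(B_1\cap T^{-n}B_2)\sim b_n^{-1}\mu(B_1)\mu(B_2)$ for $B_1,B_2\in\cl{E}_A$ with $\mu$-null boundaries (displayed as \eqref{eq:unif ret to mix}), then invokes \cite[Proposition 2.6]{bai:2020:limit} to lift this one-dimensional mixing condition to the product system $(E^p,\cl{E}^p,\mu^p,T_p,A^p)$ --- a step that is nontrivial there precisely because $T_p$ need not inherit ergodicity or conservativity --- and finally \cite[Proposition 2.5]{bai:2020:limit} to pass from null-boundary sets to bounded $\mu^p$-a.e.\ continuous integrands. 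You instead work directly at the level of the transfer operator: the factorization $\wh{T_p}=\wh{T}\otimes\cdots\otimes\wh{T}$ makes tensorization immediate and upgrades the conclusion to uniform a.e.\ convergence of $b_n^p\wh{T_p}^{\,n}f_2$ on $A^p$, after which integrating against the bounded, compactly $\mu^p$-supported $f_1$ is trivial (indeed, by uniformity you do not even need dominated convergence). Your route is self-contained and exploits the full strength of \eqref{eq:uniform ret}, avoiding the product-system machinery entirely, at the cost of redoing the extension step that the paper outsources; the paper's route buys brevity and shows the asymptotics already follow from the weaker derived mixing condition on sets.

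Two small repairs are needed in your sandwiching step, both routine. First, you cannot literally arrange cells ``on which the oscillation of $f_2$ is small'': cells meeting the discontinuity set may have large oscillation no matter how fine the partition. The correct Darboux-type formulation is that, along a sequence of null-boundary partitions with mesh tending to zero, $\overline f-\underline f$ converges pointwise to the oscillation function of $f_2$, which vanishes $\mu^p$-a.e.\ by assumption; dominated convergence on the finite measure space $A^p$ then yields $\mu^p(\overline f-\underline f)<\epsilon$, which is all the sandwich requires. Second, the compact set produced by tightness need not itself have $\mu$-null boundary; you should replace it by a finite union of balls of small radius with $\mu$-null boundaries (possible since, for each center, at most countably many radii give the boundary sphere positive mass) and disjointify, so that every cell, including the remainder, has null boundary. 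With these standard adjustments your argument is complete and correct.
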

\begin{proof}
For any $B_1,B_2\in \cl{E}_A$ such that $1_{B_1}$ and $1_{B_2}$ are $\mu$-a.e.\ continuous functions on $A$ (namely,  $\mu(\partial B_1)=\mu (\partial B_2)=0$, where $\partial B$ denotes the boundary of a set $B\subset A$), using the dual property of $\wh{T}$ in \eqref{eq:dual},  the    uniform convergence in \eqref{eq:uniform ret} implies that
\begin{equation}\label{eq:unif ret to mix}
\mu(B_1 \cap T^{-n} B_2)=\int_E 1_{B_1}  \cdot  (1_{B_2} \circ T^n) d\mu=\int_A \left(\wh{T}^n 1_{B_1}\right)   \cdot 1_{B_2}   d\mu \sim b_n^{-1} \mu(B_1) \mu(B_2)
\end{equation}
as $n\rightarrow\infty$. This verifies the mixing relation in \cite[Equation (6)]{bai:2020:limit}  and hence also \cite[Assumption 2.1]{bai:2020:limit} after noting that $(b_n)$  here plays the same role as  $(\rho_n)$ in \cite{bai:2020:limit}. (Strictly speaking, $(\rho_n)$ in \cite{bai:2020:limit} was restricted to a subclass of $\RV_\infty(1-\beta)$, although this restriction can be easily relaxed.) In view of  \cite[Proposition 2.6]{bai:2020:limit}, we know that \cite[Assumption 2.1]{bai:2020:limit} holds if the system $(E,\cl{E},\mu, T,A)$   is replaced by  the product system $(E^p,\cl{E}^p,\mu^p,T_p,A^p)$. With \cite[Proposition 2.5]{bai:2020:limit} applied to the product system,   the conclusion of this lemma follows (or see directly   \cite[Equation (11)]{bai:2020:limit}).
\end{proof}
\subsection{Random measure and multiple stochastic integrals}\label{sec:mult int}  
%

We shall first provide a brief introduction to an infinitely divisible random measure.
For a formal definition, see \cite[Section 3.2]{samorodnitsky:2016:stochastic}.   
 Let $(E,\cl{E},\mu)$ be a  measure space where $\mu$ is $\sigma$-finite and atomless.   An infinitely divisible  (signed) random  measure $M$ on $(E,\cl{E})$ with control measure $\mu$ can be viewed as an infinitely divisible stochastic process  with index set $\cl{E}_0:=\{ B\in \cl{E}:\ \mu(B)<\infty\}$.   The random measure $M$ is \emph{independently scattered} and \emph{$\sigma$-additive}:  for any disjoint $B_n \in \cl{E}_0$, $n\in \bb{Z}_+$, the random variables $M(B_1),M(B_2),\ldots$ are independent, and if in addition $\cup_{n=1}^\infty B_n\in \cl{E}_0$, then  $M(\cup_{n=1}^\infty  B_n)=\sum_{n=1}^\infty M(B_n)$ a.s..  In view of these properties, the law of $M$ is completely specified by  the law of the marginal distributions of $M(B)$, $B\in \cl{E}_0$.  As an infinitely divisible process, it is well-known that $M$ can be decomposed into a Gaussian component and a Poisson component.
  We shall work with an infinitely divisible random measure $M$  with only  the Poisson component present. Below is the precise assumption on $M$ which defines the   stationary sequence in \eqref{eq:X_k intro}.
\begin{Ass}\label{ass:levy} 
$M$ is an infinitely divisible random measure         $(E,\cl{E},\mu)$    whose law is specified by
\begin{equation}\label{eq:chf M}
\E e^{iu M(B)}=\exp\left(-\mu(B) \int_{\bb{R}  } (1-\cos(uy))  \rho(dy)\right), \quad u\in \bb{R},
\end{equation}
where $B\in \cl{E}_0$, and    $\rho$ is a  symmetric  L\'evy measure on $\bb{R}$ satisfying $\rho(\{0\})=0$ with a unit second moment:
\begin{equation}\label{eq:2nd moment unit}
 \int_{\bb{R}} x^2 \rho(dx) = 1.
\end{equation} 
\end{Ass}
 These assumptions on $M$  are similar to those in \cite{bai:2020:functional}, except that in this paper $M$ has a finite variance. 
The symmetry of $\rho$ implies that \[\E M(B)=0,~B\in \cl{E}_0 \] and the standardization \eqref{eq:2nd moment unit}  implies
\begin{equation}\label{eq:2nd moment measure} 
E M(B)^2=-\frac{d^2}{du^2}\ln\E e^{iu M(B)}|_{u=0}=\mu(B)\int_{\bb{R}} x^2 \rho(dx)=\mu(B),~   B\in \cl{E}_0.
\end{equation}

We shall also need the following generalized inverse of the tail L\'evy measure: 
\begin{equation}\label{eq:rho inv}
\rho^{\leftarrow}(y)=\inf\{x> 0: \rho((x,\infty))\le y/2\},\quad y>0.
\end{equation}
The following relation between the generalized inverse and the moments of the L\'evy measure will be useful.
\begin{Lem}\label{Lem:rho moment}
We have for any $r>0$ that
     \begin{equation}\label{eq:levy moment equiv}
 \int_{\bb{R} } x^r \rho(dx)=  \int_0^\infty  \rho^{\leftarrow}(y)^r dy. 
\end{equation} 
\end{Lem}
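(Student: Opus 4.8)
The plan is to interpret the left-hand side as $\int_{\bb{R}} |x|^r \rho(dx)$ (the symmetry of $\rho$ together with $\rho(\{0\})=0$ gives $\int_{\bb{R}}|x|^r\rho(dx)=2\int_{(0,\infty)} x^r \rho(dx)$; this matches the usage in \eqref{eq:2nd moment unit}, where $r=2$), and then to evaluate both sides by reducing them to the same one-dimensional integral against the tail function. First I would introduce the tail $\bar\rho(t):=\rho((t,\infty))$, $t>0$, which is non-increasing and, being the tail of a measure, right-continuous; note that $g(t):=2\bar\rho(t)$ is exactly the function whose generalized inverse is $\rho^{\leftarrow}$, in the sense that $\rho^{\leftarrow}(y)=\inf\{x>0:\ g(x)\le y\}$.

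For the left-hand side, I would apply the layer-cake identity $x^r=\int_0^x r t^{r-1}\,dt$ together with Tonelli's theorem (everything is non-negative, so no integrability is needed and the identity is allowed to take the value $+\infty$):
\[
2\int_{(0,\infty)} x^r\,\rho(dx)=2\int_{(0,\infty)}\Big(\int_0^x r t^{r-1}\,dt\Big)\rho(dx)=2\int_0^\infty r t^{r-1}\,\bar\rho(t)\,dt=\int_0^\infty r t^{r-1} g(t)\,dt.
\]
For the right-hand side, the crux is the duality between a non-increasing right-continuous function and its generalized inverse: for every $t>0$ and $y>0$ one has $\rho^{\leftarrow}(y)>t\iff y< g(t)$. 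The forward implication is immediate from the definition of the infimum, while the reverse uses the right-continuity of $g$ to rule out equality at a jump point. Granting this, another application of the layer cake and Tonelli gives
\[
\int_0^\infty \rho^{\leftarrow}(y)^r\,dy=\int_0^\infty r t^{r-1}\Big(\int_0^\infty \mathbf{1}\{\rho^{\leftarrow}(y)>t\}\,dy\Big)dt=\int_0^\infty r t^{r-1} g(t)\,dt,
\]
since $\int_0^\infty \mathbf{1}\{y<g(t)\}\,dy=g(t)$. Comparing the two displays yields the claim.

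The only genuine obstacle is the generalized-inverse duality $\rho^{\leftarrow}(y)>t\iff y<g(t)$, and in particular getting the strict inequality correct at discontinuity points of $g$; this is the one place where right-continuity of the tail $\bar\rho$ is actually used, and where the argument is not purely formal. Everything else is bookkeeping with Tonelli, valid in $[0,\infty]$ so that both sides may be infinite simultaneously. I would also remark that the factor $1/2$ in the definition \eqref{eq:rho inv} of $\rho^{\leftarrow}$ is precisely what accounts for the two-sided tail of the symmetric measure $\rho$: it supplies the function $g=2\bar\rho$ and thereby cancels against the factor $2$ produced by the symmetry reduction of the left-hand side.
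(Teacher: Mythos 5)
Your proposal is correct and follows essentially the same route as the paper: symmetry reduces the left side to twice the integral over $(0,\infty)$, the layer-cake identity with Tonelli brings in the tail $\rho((x,\infty))$, and the generalized-inverse duality $y/2<\rho((x,\infty))\iff x<\rho^{\leftarrow}(y)$ (your $y<g(t)\iff \rho^{\leftarrow}(y)>t$ with $g=2\bar\rho$) lets Fubini produce $\int_0^\infty \rho^{\leftarrow}(y)^r\,dy$. The paper simply asserts this equivalence in one line, whereas you justify the delicate direction via right-continuity of the tail; that is a welcome elaboration, not a different argument.
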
 
\begin{proof}
By the symmetry of $\rho$, Fubini and the equivalence   $y/2<\rho((x,\infty))\iff x<\rho^{\leftarrow}(y)$, $x,y>0$,  we have
 
\begin{align*}
 \int_{\bb{R}} x^r \rho(dx) &= 2 \int_{(0,\infty)} x^r \rho(dx)= 2\int_0^\infty r x^{r-1}  \rho ((x,\infty))dx=\int_{0}^\infty r x^{r-1}dx   \int_0^\infty 1_{\{y/2<\rho((x,\infty))\}} dy   \\
&= \int_0^\infty dy \int_0^{\rho^{\leftarrow}(y)}    r x^{r-1} dx =  \int_0^\infty  \rho^{\leftarrow}(y)^r dy.
\end{align*} 
\end{proof}

For a function $f\in L^2(E^p,\cl{E}^p,\mu^p)$, $p\in \bb{Z}_+$, the (off-diagonal) multiple integral  
\[
I_p(f)=\int_{E^p}' f(x_1,\ldots,x_p) M(dx_1)\ldots M(dx_p) 
\]
can be defined using a classical  approach orignated from \cite{ito:1951:multiple}: first for $f=1_{B_1\times\ldots\times B_p}$ with disjoint $B_i\in \cl{E}_0$,    $i=1,\ldots,p$, define $I_p(f)=M(B_1)\ldots M(B_p)$. Then  extend the definition to a linear combination of such functions, and finally   to a general $f\in L^2$ by a $L^2$ approximation. It is important, as indicated by the prime $'$, that the diagonal set $
D=\{(x_1,\ldots,x_p)\in E^p:\ x_i=x_j,\ i\neq j \}$ is excluded in the integration.  In view of this, one may always treat the integrand $f$ as   $f 1_{D^c}$.
See   \cite[Section 5.4]{peccati:taqqu:2011:wiener} for more details. Because of the invariance of the integral with respect to the permutation of the variables of $f$, one   can often assume without loss of generality that $f$ is symmetric, that is, its value is invariant with respect to any permutation of its variables.  For symmetric $f_1,f_2\in L^2(E^p,\cl{E}^p,\mu^p)$, we have the $L^2$ isometry property:
\begin{equation}\label{eq:L^2 iso}
\E I_{p_1}(f_1)I_{p_2}(f_2)= \begin{cases} 
p! \mu^p (f_1 f_2) &\text{ if }p_1=p_2=:p;\\
0 &\text{ if }p_1\neq p_2.
\end{cases}
\end{equation}

Alternatively, the multiple integral   may be constructed through  a series representation of the symmetric  infinitely divisible random measure $M$ without a Gaussian component (see, e.g., \cite{szulga:1991:multiple}). Such a construction is used in a coupling argument in Section \ref{sec:red}    and in the proof of tightness in $D[0,1]$ in Section \ref{sec:proof CLT}. If in addition $\rho(\bb{R})<\infty$, the multiple integral may also be expressed through a compound Poisson representation of $M$ (see Section \ref{sec:red}),  which we  shall use to facilitate   the computation of moments.

\section{Main results}\label{Sec:main}

In this section we state the main results.  
Throughout this section we shall make the following assumptions:
\begin{itemize}
\item $(E,\cl{E},\mu)$ is an atomless $\sigma$-finite infinite-measure space;
\item $(E,\cl{E},\mu,T)$ is ergodic and conservative    with a distinguished subspace $A$ satisfying Assumption \ref{ass:dyn}; 
\item The random measure  $M$ satisfies Assumption \ref{ass:levy};
 \item    The stationary sequence $(X_k)$ is as in \eqref{eq:X_k intro}, where $f$ is a symmetric bounded and $\mu^p$-a.e.\ continuous function on $A^p$ (extended to $(A^p)^c$ by taking zero value there). 
\end{itemize}
Note that the $\mu^p$-a.e.\ continuity of $f$ is with respect to the product  topology of the subspace $A$ in Assumption \ref{ass:dyn} .  Such $f$ is always in $L^2(\mu^p)$ since it is bounded and $\supp(f)\subset A^p$ where $\mu^p(A^p)=\mu(A)^p<\infty$, and hence $(X_k)$ is well-defined in view of Section \ref{sec:mult int}.

We first  clarify the memory property of $(X_k)$  implied by Assumption \ref{ass:dyn}. Recall the notation $\mu^p(f)=\int_{E^p} f d\mu^p$. 

\begin{Cor}\label{Cor:cov}
We have as $k\rightarrow\infty$,
\[
\E[X_k X_0]=p!  \mu^p\left((f\circ T_p^k) \cdot f\right)\sim p!  \mu^p(f)^2 b_k^{-p}\in \RV_\infty(p(\beta-1)).
\]
\end{Cor}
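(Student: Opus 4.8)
The plan is to reduce the statement to two tools already established in the excerpt: the $L^2$ isometry \eqref{eq:L^2 iso} for multiple integrals and the covariance-decay estimate of Lemma \ref{Lem:Cov Asymp}. First I would observe that, writing $T_p = T \times \cdots \times T$, the defining relation \eqref{eq:X_k intro} reads $X_k = I_p(f \circ T_p^k)$, where $(f \circ T_p^k)(x_1,\ldots,x_p) = f(T^k x_1,\ldots,T^k x_p)$. Since $T$ is $\mu$-preserving, $T_p$ is $\mu^p$-preserving, so $\mu^p((f\circ T_p^k)^2) = \mu^p(f^2) < \infty$ and hence $f\circ T_p^k \in L^2(\mu^p)$; moreover, because $f$ is symmetric, so is $f\circ T_p^k$, as permuting the arguments commutes with applying $T$ coordinatewise. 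Thus both $X_0 = I_p(f)$ and $X_k = I_p(f\circ T_p^k)$ are multiple integrals of symmetric, square-integrable, order-$p$ kernels, which is exactly the setting in which \eqref{eq:L^2 iso} applies.

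With this in place, applying the isometry \eqref{eq:L^2 iso} with $p_1 = p_2 = p$ yields the exact identity $\E[X_k X_0] = p!\,\mu^p\big((f\circ T_p^k)\cdot f\big)$, which is the first equality in the statement.

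For the asymptotic equivalence I would invoke Lemma \ref{Lem:Cov Asymp} with the choice $f_1 = f_2 = f$. The standing assumptions of Section \ref{Sec:main} guarantee that $f$ is bounded and $\mu^p$-a.e.\ continuous on $A^p$, which are precisely the hypotheses of that lemma; it therefore gives $\mu^p\big((f\circ T_p^k)\cdot f\big) \sim b_k^{-p}\,\mu^p(f)^2$ as $k\to\infty$, and multiplying through by $p!$ produces $\E[X_k X_0] \sim p!\,\mu^p(f)^2\, b_k^{-p}$.

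Finally, for the regular-variation claim, since $(b_n)\in \RV_\infty(1-\beta)$ by \eqref{eq:bn_RV}, the power $b_k^{-p}$ lies in $\RV_\infty(-p(1-\beta)) = \RV_\infty(p(\beta-1))$, and because asymptotic equivalence preserves the regular-variation index, $\E[X_k X_0]\in \RV_\infty(p(\beta-1))$ as well. As every step is a direct application of an already-proved result, I do not anticipate a genuine obstacle; the only points requiring care are the bookkeeping in the first paragraph — verifying that $f\circ T_p^k$ stays symmetric and square-integrable so that the isometry is legitimately applicable — and confirming that the standing hypotheses on $f$ match those required by Lemma \ref{Lem:Cov Asymp}.
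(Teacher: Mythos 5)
Your proposal is correct and follows the same route as the paper, whose proof consists precisely of citing the isometry \eqref{eq:L^2 iso} and Lemma \ref{Lem:Cov Asymp}; your extra bookkeeping (checking that $f\circ T_p^k$ remains symmetric and in $L^2(\mu^p)$ via the measure-preserving property of $T_p$, and that the standing hypotheses on $f$ match those of the lemma) simply makes explicit what the paper leaves implicit.
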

 Corollary \ref{Cor:cov} follows from \eqref{eq:L^2 iso} and Lemma \ref{Lem:Cov Asymp}.
Depending on whether $p(\beta-1)<-1$ or $p(\beta-1)>-1$, the covariance $\E[X_kX_0]$ is summable or not. This corresponds to a classical distinction between short-range dependence and long-range dependence. In particular, in the short-range dependence regime  $p(\beta-1)<-1$, it can be shown that $\Var(\sum_{k=1}^n X_k)$ scales linearly as $n\rightarrow\infty$, whereas in the long-range dependence regime $p(\beta-1)>-1$, the variance $\Var(\sum_{k=1}^n X_k)$ scales super-linearly. Hence the order of normalization in limit theorems need to be chosen differently in these two situations. See, e.g., \cite[Chapter 2]{pipiras:2017:long}, for more details. 

\begin{Thm}\label{Thm:CLT}
If    $p(\beta-1)<-1$ (so necessarily $p\ge 2$), then  as $n\rightarrow\infty$,
\begin{equation}
\left(\frac{1}{n^{1/2}}\sum_{k=1}^{\floor{nt}} X_k\right)_{t\in [0,1]} \ConvFDD \left(\sigma B(t) \right)_{t\in [0,1]},
\end{equation}
where $\ConvFDD$ stands for convergence of finite-dimensional distributions, $B$ is a standard Brownian motion, and
\[
\sigma^2=\sum_{k=-\infty}^\infty \E[ X_kX_0]=\sum_{k=-\infty}^\infty p!  \mu^p\left((f\circ T_p^k) \cdot f\right).
\] If in addition, $\int_{\bb{R} }   x^4  \rho(dx)<\infty$, then   $\ConvFDD$  can be replaced   by weak convergence in $D[0,1] $ with  the uniform metric. 
\end{Thm}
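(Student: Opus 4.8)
The plan is to establish the finite-dimensional convergence via the method of moments and then upgrade to functional convergence in $D[0,1]$ by proving tightness. For the finite-dimensional part, I would fix times $0\le t_1<\dots<t_m\le 1$ and consider an arbitrary linear combination $S_n=\sum_{j} \lambda_j n^{-1/2}\sum_{k=1}^{\floor{nt_j}}X_k$. Since the target is Gaussian, it suffices to show that all moments of $S_n$ converge to those of the corresponding centered Gaussian; by the earlier reduction strategy I would first replace $M$ by a compound Poisson random measure with all moments finite (exploiting the $L^2$-approximation and the coupling announced in Section \ref{sec:red}), so that all the integrals and moments below are literally finite and can be manipulated. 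The second moment is already handled: by Corollary \ref{Cor:cov} and summability in the regime $p(\beta-1)<-1$, one gets $\Var(n^{-1/2}\sum_{k=1}^{\floor{nt}}X_k)\to t\sigma^2$ with $\sigma^2=\sum_{k}p!\,\mu^p((f\circ T_p^k)\cdot f)$, and the cross-covariances give the Brownian covariance structure $\sigma^2\min(t_i,t_j)$.

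The heart of the argument is the higher-moment computation. Expanding a product of $q$ factors $X_{k_1}\cdots X_{k_q}$ and using the product formula for multiple integrals against the compound Poisson measure, each $X_k$ contributes $p$ integration variables, and the expectation becomes a sum over pairings/partitions of the $pq$ variables governed by the Lévy measure moments $\int x^r\rho(dx)=\int_0^\infty \rho^{\leftarrow}(y)^r\,dy$ from Lemma \ref{Lem:rho moment}. The key point is to show that, after normalizing by $n^{-q/2}$, only the contributions corresponding to a complete pairing of the $q$ factors into $q/2$ pairs (each pair fully contracted through a common time-lag) survive in the limit, reproducing Wick's formula for the Gaussian moments; all other partitions must be shown to be asymptotically negligible. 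I would control these using the covariance asymptotics $\mu^p((f\circ T_p^k)\cdot f)\sim \mu^p(f)^2 b_k^{-p}\in\RV_\infty(p(\beta-1))$ together with the summability $p(\beta-1)<-1$, so that nested sums over time-lags of products of such decaying correlations converge, while any partition that leaves variables ``unpaired across factors'' either violates summability at a higher normalization power and thus vanishes. The main obstacle is precisely this combinatorial bookkeeping: one must organize the partitions of the $pq$ integration variables so that the diagonal-free structure of $I_p$, the within-factor grouping into blocks of size $p$, and the decay of the multi-lag correlations interact correctly, and then verify term by term that non-pairing configurations are $o(n^{q/2})$. I expect this step to require a careful counting argument showing that any surviving partition must match variables from distinct factors in a way that forces exactly the pairwise structure.

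For the functional upgrade under the extra hypothesis $\int x^4\rho(dx)<\infty$, I would prove tightness in $D[0,1]$ with the uniform metric. The natural route is a fourth-moment increment bound of the form $\E\big|\sum_{k=\floor{ns}+1}^{\floor{nt}}X_k\big|^4\le C\,(n(t-s))^2$ for $t-s$ bounded below by $1/n$, which by the standard moment criterion (e.g. Billingsley) yields tightness and continuity of the limit. The finiteness of the fourth moment of $X_k$ is exactly what the assumption $\int x^4\rho(dx)<\infty$ secures (via Lemma \ref{Lem:rho moment} with $r=4$), and the quadratic-in-length bound follows from the same moment expansion as above specialized to $q=4$, using summability of the covariances to absorb the lag sums into a constant. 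Combining tightness with the already-established finite-dimensional convergence gives weak convergence in $D[0,1]$; finally, since the limit $\sigma B$ has continuous sample paths, convergence in the Skorokhod topology is equivalent to convergence in the uniform metric, which completes the proof.
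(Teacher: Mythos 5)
Your overall architecture---reduction to a compound Poisson random measure with all moments, method of moments for the finite-dimensional distributions, and a fourth-moment increment bound for tightness---is the same as the paper's. But there is a genuine gap at the technical heart of the moment computation. You propose to kill the non-pairing partitions using only the covariance asymptotics $\mu^p((f\circ T_p^k)\cdot f)\in\RV_\infty(p(\beta-1))$ together with summability. This is insufficient: as soon as an atom index appears in more than two of the factors $X_{k_1},\dots,X_{k_r}$, or two pairs of factors share atoms, the relevant expectation involves higher-order joint correlations of the form $\mu\bigl(\bigcap_{j=1}^q T^{-k_j}A\bigr)$ with $q\ge 3$, which are multi-time functionals that cannot be expressed or bounded through two-point covariances. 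The paper controls them via Lemma \ref{Lem:potter}, proved by induction using the dual operator, the uniform convergence \eqref{eq:uniform ret}, and Potter's bound, yielding $\mu\bigl(\bigcap_{j=1}^q T^{-k_j}A\bigr)\le c\,(k_2-k_1)_1^b\cdots(k_q-k_{q-1})_1^b$ for a carefully chosen $b\in\bigl((\beta-1)\vee(-\tfrac{1}{p-1}),\,-\tfrac1p\bigr)$; only then does the combinatorial bookkeeping (the exponents $d_u$, the quantity $L$, and the relation \eqref{eq:key rel}) show that every non-pairing configuration is $o(n^{r/2})$. You correctly identify this counting as the main obstacle, but the tool you supply cannot carry it. A related omission: with a compound Poisson measure the atom count $N_n$ is random, so after proving a.s.\ convergence of the moments conditional on the Poisson process one must still establish uniform integrability to de-condition (Part~2 of Proposition \ref{Pro:moment CLT}); your sketch does not address this.

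The tightness step has a second gap. You propose to obtain $\E\bigl|\sum_{k=\floor{ns}+1}^{\floor{nt}}X_k\bigr|^4\le C(n(t-s))^2$ from ``the same moment expansion specialized to $q=4$,'' i.e., inside the compound Poisson approximation. But the reduction of Lemma \ref{Lem:reduction} only transfers finite-dimensional limits, via a pointwise-in-$t$ $L^2$ coupling bound; it does not transfer tightness, and under the stated hypothesis $\int x^4\rho(dx)<\infty$ the original $X_k$ need not possess moments beyond the fourth, so the all-moments expansion is unavailable for $\rho$ itself. The paper therefore proves the fourth-moment bound directly for general $\rho$: it uses the series representation $M_{3,n}$, a generalized Khinchine inequality for multilinear forms in Rademacher variables conditionally on $(\Gamma_i)$ and $(U_{i,n})$, and Poisson moment identities---and even there the increment estimate again requires the Lemma \ref{Lem:potter}-type bounds (cf.\ \eqref{eq:B neg}), not merely covariance summability. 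Your concluding observation that Skorokhod convergence upgrades to the uniform metric because the limit is continuous is fine, but the moment bound it rests on is not established by your outline.
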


The proof of Theorem \ref{Thm:CLT} can be found in Section \ref{Sec:proofs}. We believe that for the convergence in $D[0,1]$, the assumption of a finite fourth moment  is an artifact of our proof technique and may be relaxed.    We also note that in the case $p(\beta-1)=-1$, we anticipate a central limit theorem similar to \eqref{eq:CLT}  to hold with Brownian motion as limit, although depending on the slowly varying factor  in \eqref{eq:bn_RV}, an additional slowly varying factor may appear in the normalization in \eqref{eq:CLT}. Treating the case  $p(\beta-1)=1$ requires some technical but non-essential modification of the proof below for Theorem \ref{Thm:CLT}, which   we shall omit in this paper. 

\begin{Thm}\label{Thm:nCLT}
If $p(\beta-1)\in (-1,0)$, and $p=1$ or $2$, then  as $n\rightarrow\infty$,
\begin{equation}
\left(\frac{1}{a_n}\sum_{k=1}^{\floor{nt}} X_k\right)_{t\in [0,1]} \Rightarrow  \left(\mu^p(f) Z_{p,\beta}(t) \right)_{t\in [0,1]},
\end{equation}
where $\Rightarrow$ stands for weak convergence in $D[0,1]$ with  the uniform metric, $\mu^p(f) =\int_{A^p} f d\mu^p$, the process  $Z_{p,\beta}$ is the standard Hermite process as in \eqref{eq:Herm proc} (i.e., the fractional Brownian motion if $p=1$, and the Rosenblatt process if $p=2$), the normalization sequence 
\begin{equation}\label{eq:a_n}
(a_n)=  \left(  \left(\frac{ 1 }{ (1-p(1-\beta)/2)(1-p(1-\beta)) p! }\right)^{1/2}  \frac{n}{b_n^{p/2}}\right) 
  \in  \RV_\infty(1-p(1-\beta)/2)
\end{equation}
  where   $(b_n)$ is as in \eqref{eq:uniform ret} and $1-p(1-\beta)/2\in (1/2,1)$.

\end{Thm}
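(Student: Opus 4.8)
The plan is to obtain convergence of the finite-dimensional distributions by the method of moments and then upgrade to $D[0,1]$ via a tightness estimate. First I would reduce to the case in which the L\'evy measure $\rho$ is finite and has finite moments of every order. Truncating the small jumps, set $\rho_\epsilon=\rho(\cdot\cap\{|x|>\epsilon\})$ and let $M_\epsilon$ and $X_k^{(\epsilon)}$ be the associated random measure and sequence. Using the series (or coupling) representation of $M$ recalled in Section \ref{sec:mult int} together with the $L^2$-isometry \eqref{eq:L^2 iso}, and a maximal inequality for the difference process $\frac{1}{a_n}\sum_{k\le\floor{nt}}(X_k-X_k^{(\epsilon)})$, I would show that $\sup_{t\in[0,1]}\bigl|\frac{1}{a_n}\sum_{k\le\floor{nt}}(X_k-X_k^{(\epsilon)})\bigr|$ is small in probability uniformly in $n$ as $\epsilon\to 0$. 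A standard approximation theorem for weak convergence then lets me assume throughout that $\rho$ is a finite symmetric measure with $\int x^r\rho(dx)<\infty$ for all $r$, so that $M$ admits a compound Poisson representation and every moment below is finite; in particular this is why no extra moment hypothesis (unlike the $\int x^4\rho(dx)<\infty$ in Theorem \ref{Thm:CLT}) is needed in the statement.

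Writing the partial sum as a single multiple integral, $\sum_{k=1}^{\floor{nt}}X_k=I_p(g_{n,t})$ with $g_{n,t}=\sum_{k=1}^{\floor{nt}}f\circ T_p^k$, I would then compute the joint moments $\E\bigl[\prod_{j=1}^m I_p(g_{n,t_j})^{q_j}\bigr]$ through the compound Poisson representation of $M$ as a sum over Poisson points in $E\times\R$ with intensity $\mu\otimes\rho$. This yields the diagram expansion: the moment is a sum over set partitions $\pi$ of the $p(\sum_j q_j)$ legs, grouped into $\sum_j q_j$ copies of $p$ legs, subject to the off-diagonal constraint that no block of $\pi$ contains two legs from the same copy, where each block of size $r$ contributes a factor $\int_\R x^r\rho(dx)$ (which vanishes when $r=1$ by the symmetry of $\rho$) and one shared spatial integration.

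The heart of the argument is the asymptotic evaluation of these partition sums. Using Lemma \ref{Lem:Cov Asymp} and the regular variation \eqref{eq:bn_RV} of $(b_n)$, I would show that, after normalizing by $a_n^{\sum_j q_j}$, the only partitions surviving in the limit are the complete pairings (all blocks of size exactly $2$); these reproduce exactly the moment formula for the multiple Wiener--It\^o integral representation \eqref{eq:Herm proc} and hence the moments of $(\mu^p(f)Z_{p,\beta}(t_j))$, with the normalization \eqref{eq:a_n} precisely calibrated so that the leading pairing term matches the covariance asymptotics of Corollary \ref{Cor:cov}. Every partition containing a block of size $\ge 3$ must be shown to be of strictly smaller order in $n$; this order-counting, which tracks how each block contracts spatial variables and contributes powers of $b_n^{-1}$ against the super-linear growth of $a_n$, is the main technical obstacle. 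The restriction $p\in\{1,2\}$ enters at the final step of concluding from moments: for $p=1$ the limiting fractional Brownian motion is Gaussian and hence moment-determinate, and for $p=2$ the Rosenblatt process is known to be moment-determinate, so convergence of all moments yields convergence of finite-dimensional distributions; for $p\ge 3$ the moment problem for the limiting Hermite process is not known to be determinate, which is exactly why that regime can only be conjectured.

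Finally, to obtain weak convergence in $D[0,1]$ I would establish tightness through a moment bound on increments. Fixing an even integer $q$ with $qH>1$ (possible since $H=1-p(1-\beta)/2\in(1/2,1)$), the same partition analysis applied to $\E|S_n(t)-S_n(s)|^{q}$, now with integrand $g_{n,t}-g_{n,s}$ supported on the indices between $\floor{ns}$ and $\floor{nt}$, yields a bound of the form $\E|S_n(t)-S_n(s)|^{q}\le C\,(|t-s|+n^{-1})^{qH}$ uniformly in $n$, where $S_n(t)=\frac{1}{a_n}\sum_{k=1}^{\floor{nt}}X_k$. By a Billingsley-type moment criterion this gives tightness in $D[0,1]$ under the uniform metric, which combined with the finite-dimensional convergence completes the proof.
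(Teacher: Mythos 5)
Your proposal reproduces the paper's architecture faithfully: reduce to a L\'evy measure with all moments finite, pass to a compound Poisson representation, compute joint moments as a sum over partitions of the $pr$ legs in which blocks of odd size (in particular singletons) vanish by symmetry, identify the complete pairings as the sole surviving terms matching the Hermite moment formula, invoke moment determinacy of the first and second Wiener chaos (Slud) plus marginal-implies-joint determinacy (Schm\"udgen) for $p\in\{1,2\}$, and finish with a moment bound for tightness. This is exactly the paper's route (Lemma~\ref{Lem:reduction}, Proposition~\ref{Pro:moment nCLT}). However, two steps as written would fail. First, your truncation $\rho_\epsilon=\rho(\cdot\cap\{|x|>\epsilon\})$ removes only the \emph{small} jumps: the resulting jump distribution $\rho_\epsilon/\rho_\epsilon(\bb{R})$ inherits the tail of $\rho$, which under Assumption~\ref{ass:levy} is guaranteed only a second moment, so your claim that ``every moment below is finite'' is false; you must also cut the \emph{large} jumps. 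The paper does this by approximating the generalized inverse $\rho^{\leftarrow}$ in $L^2(\bb{R}_+)$ by a bounded simple function $\rho_0^{\leftarrow}$ and coupling the two models through a shared series representation \eqref{eq:M_2,n}, which simultaneously delivers the finite-moment property and the quantitative $L^2$ error estimate \eqref{eq:coupling diff}. Second, Lemma~\ref{Lem:Cov Asymp} is only a two-fold mixing statement and cannot produce the limit of the pairing terms: each copy $I_\ell$ shares its $p$ points with \emph{several} other copies, so the pairing blocks chain all $r$ time indices together and $\E\prod_{\ell=1}^r L_{n,I_\ell,t_\ell}$ does not factor into pairwise covariances. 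What is needed is the $r$-fold joint-moment asymptotics with the kernels $h_q^{(\beta)}$ of \eqref{eq:hq}, i.e.\ Lemma~\ref{Lem:moments LRD} (imported from prior work), whose proof iterates the uniform dual-operator convergence \eqref{eq:uniform ret}; your proposal names this order-counting as ``the main technical obstacle'' but supplies no substitute for that lemma, and it is precisely the heart of the argument.

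A smaller but genuine issue concerns tightness. Your reduction step (even once repaired) controls only finite-dimensional distributions, and the maximal inequality you assert for the difference process $a_n^{-1}\sum_{k\le\floor{nt}}(X_k-X_k^{(\epsilon)})$, uniform in $n$, is left unsubstantiated and is not available off the shelf for multiple-integral partial sums; so a $q$-th moment increment bound with $q>2$, derived in the reduced all-moments model, does not transfer to general $\rho$. The fix is to observe that since $H=1-p(1-\beta)/2>1/2$ you may take $q=2$: the bound $\E|S_n(t)-S_n(s)|^2\le c\,((\floor{nt}-\floor{ns})/n)^{2H'}$ with $2H'>1$ follows for \emph{general} $\rho$ directly from the $L^2$ isometry \eqref{eq:L^2 iso} and Corollary~\ref{Cor:cov}, with no reduction needed; this is the paper's route and is also the correct explanation of why no fourth-moment hypothesis appears here, in contrast to Theorem~\ref{Thm:CLT} where the normalization exponent equals $1/2$ and a second-moment increment bound is too weak.
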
The proof of Theorem \ref{Thm:nCLT} can be found in Section \ref{Sec:proofs}.
We   mention that for $p=1$,     a result similar to Theorem   \ref{Thm:nCLT} has been considered in  \cite[Theorem 9.4.7]{samorodnitsky:2016:stochastic}. There  the dynamical system $(E,\cl{E},\mu,T)$ was constructed using the path space of a null-recurrent Markov chain and its infinite invariant measure.  The proof exploited the infinite divisibility of a  single stochastic integral.

The reason we can only include cases $p=1$ and $2$  in Theorem \ref{Thm:nCLT}  is because moment determinacy either ceases to hold or is unknown for the limit law when $p\ge 3$ (see \cite{slud:1993:moment}, and a simple explanation is that higher $p$ leads to heavier tails of a multiple Gaussian integral, see e.g., \cite[Theorem 6.12]{janson:1997:gaussian}). Hence our proof based on the method of moments cannot conclude the cases where $p\ge 3$. Nevertheless, we expect the following conjecture to hold. 
\begin{Con}\label{Con:nclt}
The conclusion of Theorem \ref{Thm:nCLT} continues to hold if $p\ge 3$.
\end{Con}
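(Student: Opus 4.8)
Because the statement is a conjecture left open precisely where the method of moments fails --- the order-$p$ Hermite process is not determined by its moments once $p\ge 3$ --- my plan is to discard moments altogether and instead transfer the Gaussian result, already known for every $p$, to the infinitely divisible setting by a universality (invariance-principle) argument. Writing $S_n(t):=a_n^{-1}\sum_{k=1}^{\floor{nt}}X_k$, linearity of the multiple integral gives $S_n(t)=I_p(h_{n,t})$ with the fixed kernel $h_{n,t}:=a_n^{-1}\sum_{k=1}^{\floor{nt}} f\circ T_p^k\in L^2(\mu^p)$, where $I_p$ denotes integration against $M$. The key structural remark is that Theorem \ref{Thm:nCLT} with $M$ replaced by a Gaussian random measure $W$ of the same control measure $\mu$ --- denote the associated Wiener--It\^o integral by $I_p^W$ --- is exactly the content of \cite{bai:2020:limit} and holds for all $p$, since in the Gaussian chaos $L^2$-convergence of kernels already forces convergence of the integrals and moment determinacy is never invoked. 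It therefore suffices to prove that the finite-dimensional distributions of $\big(I_p(h_{n,\cdot})\big)$ and $\big(I_p^W(h_{n,\cdot})\big)$ coincide asymptotically, and then to establish tightness of the former in $D[0,1]$.

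The mechanism that makes the two integrators interchangeable is a vanishing-influence phenomenon. Fix a partition of $A$ into Borel cells $\{\Delta_i\}$ of mesh $\delta$; using that $T$ is $\mu$-preserving (so translated cells keep their mass), discretize $h_{n,t}$ into a symmetric degree-$p$ homogeneous multilinear form $Q^\delta_{n,t}$ with coefficient kernel $c^\delta_{n,t}$ evaluated at the independent, symmetric, unit-variance increments $\xi_i:=M(\Delta_i)/\mu(\Delta_i)^{1/2}$. By Corollary \ref{Cor:cov} and the computations behind Lemma \ref{Lem:Cov Asymp}, $\E S_n(1)^2$ stays bounded and converges to $\Var[\mu^p(f)Z_{p,\beta}(1)]$, whereas \eqref{eq:w_n}--\eqref{eq:b_n w_n} show that the support of $h_{n,t}$ occupies a region of $\mu^p$-measure of order $w_n^p\to\infty$. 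This diffuseness should force the maximal coordinate influence $\max_i\sum_{(i_2,\dots,i_p)}c^\delta_{n,t}(i,i_2,\dots,i_p)^2$ to $0$ as $n\to\infty$ for each fixed $\delta$. The invariance principle for homogeneous sums of Nourdin, Peccati and Reinert then yields, via a $\mathcal{C}^3$-Lindeberg bound controlled by the square root of the maximal influence and the third absolute moments of the $\xi_i$, that $Q^\delta_{n,t}(\xi)$ and its Gaussian analogue $Q^\delta_{n,t}(G)$, which equals $I_p^W$ of the same discretized kernel, share the same weak limit; the latter converges by \cite{bai:2020:limit} and the isometry \eqref{eq:L^2 iso}. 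Taking linear combinations, which are again degree-$p$ homogeneous forms, a Cram\'er--Wold reduction upgrades this to joint convergence.

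Two limits must then be exchanged with care, and this is where the proof becomes delicate. The mesh $\delta$ cannot be sent to zero before $n\to\infty$, because refining the partition renders the normalized increments increasingly Poissonian: one computes $\E\xi_i^4=3+\mu(\Delta_i)^{-1}\int_{\bb{R}}x^4\rho(dx)$, which blows up as $\mu(\Delta_i)\to0$ and destroys the uniform moment control the invariance principle requires. The remedy is to fix $\delta$, pass to $n\to\infty$ to obtain a Gaussian-chaos object $Z^\delta$, and only then let $\delta\to0$, using that $Z^\delta\to\mu^p(f)Z_{p,\beta}$ in $L^2$ on the Gaussian side (where moments are mesh-insensitive) together with the uniform discretization estimate $\sup_n\E\big(I_p(h_{n,t})-Q^\delta_{n,t}(\xi)\big)^2\to0$ as $\delta\to0$; a routine double-limit (Slutsky) argument then closes the finite-dimensional convergence. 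For tightness in $D[0,1]$ I would reuse the series-representation estimates of the central limit proof to bound fourth moments of the increments of $S_n$. Since the Lindeberg step needs finite third --- hence conveniently finite fourth --- absolute moments of the $\xi_i$ at fixed mesh, I expect the theorem obtained this way to carry the additional hypothesis $\int_{\bb{R}}x^4\rho(dx)<\infty$, just as in Theorem \ref{Thm:CLT}.

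The main obstacle is the rigorous verification of vanishing influence together with the uniform-in-$n$ discretization estimate in this infinite-measure, regularly varying environment: one must show that no single cell $\Delta_i$ carries a non-negligible fraction of $\|h_{n,t}\|_{L^2(\mu^p)}^2$, which reduces to quantitatively controlling how the occupation sums $\sum_{k\le\floor{nt}}1_A(T^k\cdot)$ spread their mass across the translates of $A$, and is the exact point where the argument must use Assumption \ref{ass:dyn} more forcefully than Lemma \ref{Lem:Cov Asymp} alone. A secondary difficulty is that the homogeneous-sum invariance principle is stated for i.i.d.\ coordinates, whereas the $\xi_i$ are merely independent with common variance but cell-dependent laws; this I would handle either by a non-identically-distributed version of the Lindeberg telescoping or by grouping cells so that the effective coordinates become genuinely i.i.d.
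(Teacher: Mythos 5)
You are attempting a statement that the paper itself does not prove: this is Conjecture \ref{Con:nclt}, explicitly left open because moment determinacy fails (or is unknown) for chaoses of order $p\ge 3$, with the author suggesting that a conclusive proof may need the local-time representations of \cite{bai:2019:representations}. So there is no paper proof to compare against, and your universality program must stand on its own --- and it does not, for reasons beyond the difficulties you flag. The fatal step is the discretize-then-double-limit scheme. For the uniform estimate $\sup_n \E\bigl(I_p(h_{n,t})-Q^\delta_{n,t}(\xi)\bigr)^2\to 0$ at a \emph{fixed} mesh, the partition must essentially refine the joint level sets of $f\circ T_p^k$ for all $k\le n$; in the systems covered by Assumption \ref{ass:dyn} (null-recurrent Markov-chain systems, interval maps with indifferent fixed points), $T^{-k}\Delta\cap A$ breaks into pieces (cylinders, preimage intervals) whose $\mu$-measure decays to zero as $k$ grows, so no partition chosen independently of $n$ can approximate $h_{n,t}$ uniformly in $n$. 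The cells are therefore forced to shrink as $n\to\infty$, and then your own computation $\E\xi_i^4=3+\mu(\Delta_i)^{-1}\int_{\bb{R}}x^4\rho(dx)$ shows the moment input of the Mossel--O'Donnell--Oleszkiewicz/Nourdin--Peccati--Reinert Lindeberg bound blows up exactly where you need it bounded: the two limits cannot be decoupled in the order you prescribe, and this is a collapse of the method, not a technical loose end. A second structural error: since $T$ is conservative, the sets $T^{-k}A$ overlap substantially (see \eqref{eq:erg and cons}), so ``translated cells keep their mass'' does not make $\{T^{-k}\Delta_i\}_{k,i}$ a partition; the increments you want as independent coordinates are not even well defined without a common refinement, which again forces shrinking cells. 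Finally, the vanishing-influence claim is asserted from diffuseness of the support, but the influences here are random functionals of the $U_{i,n}$'s, and you need a bound on the \emph{maximum} over roughly $w_n$ coordinates; bounds of the type delivered by Lemma \ref{Lem:potter} and Lemma \ref{Lem:moments LRD} control only each influence in expectation, which is far from a maximal bound without concentration estimates you do not supply.

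One salvageable observation: the discretization is unnecessary, because the paper's reduction (Section \ref{sec:red}) already exhibits the partial sums, after Lemma \ref{Lem:reduction}, as exact homogeneous multilinear forms $p!\sum_{I\in\cl{D}_p(N_n)}\bigl(\prod_{i\in I}Z_i\bigr)L_{n,I,t}$ in genuinely i.i.d.\ symmetric $Z_i$ possessing all moments --- precisely the setting of the invariance principle, with no mesh parameter and no moment blow-up. But then the second pillar of your argument fails instead: Gaussianizing the $Z_i$ produces the integrator $\sum_i g_i\delta_{U_{i,n}}$, which is only \emph{conditionally} Gaussian (a Poissonized Gaussian measure), not the Gaussian random measure with control $\mu$ treated in \cite{bai:2020:limit}; so the known Gaussian theorem does not apply to the surrogate, and proving convergence of that random-kernel conditional chaos is essentially as hard as the conjecture itself (kernel-covariance convergence does not imply distributional convergence in a chaos of order $p\ge 3$ without additional contraction-type control). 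In short, your proposal is a reasonable research program and is genuinely different from the route the paper hints at, but each of its two halves --- the uniform discretization with fixed mesh, and the identification of the Gaussian surrogate with the limit of \cite{bai:2020:limit} --- contains a gap that the present techniques do not close.
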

The appearance   of Hermite processes as non-central limits may be better physically understood in view of  the new representations of Hermite processes recently obtained in \cite{bai:2019:representations}, which involve the local time of intersecting stable regenerative sets. Although the moment calculation performed in this paper (Proposition \ref{Pro:moment nCLT} below) cannot conclude Conjecture \ref{Con:nclt},  yet it provides a compelling evidence. A conclusive proof for $p\ge 3$ may need to exploit the local time representations in   \cite{bai:2019:representations}. 

We also mention that it is natural to consider an extension of the results in the paper to the case where the infinitely divisible random measure $M$ has both   Gaussian   and   non-Gaussian components. Indeed when $p=1$, such an extension   is straightforward since one can   decompose the single integral into two independent components. When $p\ge 2$, however, such an independent decomposition no longer holds.  Hence the extension does not follow  from a straightforward combination of the results in \cite{bai:2020:limit} and those in the current work. This problem is left for a future work.
 
\section{Proofs of the main results}\label{Sec:proofs}

First  we provide a summary of the proof strategy. We first establish a reduction result which enables us to replace the   random measure $M$ in Assumption \ref{ass:levy}   by one with a finite L\'evy measure $\rho_0$   whose  moments of all orders exist.  This reduction result is justified through a coupling argument based on series representations of  infinitely divisible random measures  without   Gaussian components (see Lemma \ref{Lem:reduction} below). With such a finite L\'evy measure $\rho_0$, the corresponding random measure admits a compound Poisson representation with all the moments available. We can hence approach the convergence of finite-dimensional distributions in Theorems \ref{Thm:CLT} and \ref{Thm:nCLT} by the method of moments.  The tightness in $D[0,1]$ in Theorem \ref{Thm:CLT} is established via a fourth moment computation using the series representation. The tightness in Theorem \ref{Thm:nCLT}, on the other hand, follows from a well-known argument.

Below throughout, we use   $c$ to denote a generic positive constant, whose value may change from line to line.

\subsection{Reduction}\label{sec:red}

We shall follow the assumptions and notation   in Section \ref{Sec:main}.

Let $\rho_0$ be a symmetric L\'evy measure on $\bb{R}$ satisfying $\rho_0(\{0\})=0$ and
\begin{equation*}
 \int_{\bb{R} } (1\vee x^2) \rho_0(dx) <\infty.
\end{equation*} 
Then $\rho_0$ is integrable and hence the L\'evy measure of a compound Poisson distribution.
 Set 
\begin{equation}\label{eq:Q}
Q=\rho_0(\bb{R}).
\end{equation}

Recall that $A$ is the distinguished subspace in Assumption \ref{ass:dyn} .
 For fixed $n\in \bb{Z}_+$, we set
\[A_n=\bigcup_{k=1}^n T^{-k} A.
\]  
Note that in view of  \eqref{eq:w_n} we have $\mu(A_n)=w_n\in (0,\infty)$   since $\mu(A)\in (0,\infty)$.
On the probability measure space 
\begin{equation}\label{eq:prob measure spac}
(A_n,\cl{E}_n:=\cl{E}\cap A_n, \mu_n(\cdot):=\mu|_{\cl{E}_n}(\cdot )/w_n), 
\end{equation} 
where $\mu|_{\cl{E}_n}$ denotes the restriction of $\mu$ to $\cl{E}_n$, we define a random measure
\begin{equation}\label{eq:compound poisson M}
M_{1,n}(\cdot) =  \sum_{i=1}^{N_n}  Z_i  \delta_{U_{i,n}} (\cdot ) ,
\end{equation}
where $\delta_x$ is the delta measure at $x\ in E$, $(U_{i,n})$ are i.i.d.\ random elements taking value in $A_n$ with distribution $\mu_n$,   $(Z_i)$ are i.i.d.\ symmetric real random variables with distribution $\rho_0(\cdot)/Q$   whose moments of all orders exist,   
\[
N_n:=N(Q\mu(A_n))=N(Qw_n) 
\] 
with $(N(t))_{t\ge 0}$  being a unit-rate Poisson process, and $(U_{i,n})$, $(Z_i)$ and $(N(t))$  are all independent of each other.  Using some well-known properties of the Poisson process, one can verify that $M_{1,n}$ is infinitely divisible and independently scattered (and it is obviously $\sigma$-additive).  
  An elementary   computation  (see, e.g., \cite[Example 3.1.1]{samorodnitsky:2016:stochastic}) shows that the random measure   $M_{1,n}$  satisfies   \eqref{eq:chf M}  but with $\rho$   replaced by $\rho_0$.

Next, we introduce a second random measure  on $(A_n,\cl{E}_n)$ by setting
\begin{equation}\label{eq:M_2,n}
M_{2,n}(\cdot)= \sum_{i=1}^\infty \epsilon_i \rho_0^{\leftarrow}(\Gamma_i/w_n) \delta_{U_{i,n}}(\cdot ),
\end{equation}
where $(\epsilon_i)$ are i.i.d.\ Rademacher random variables, $\Gamma_i=E_1+\ldots+E_i$ with $(E_j)$   i.i.d.\ standard exponential random variables,   $(U_{i,n})_{i=1,\ldots,n}$ are i.i.d.\ random elements taking value in $A_n$  with distribution $\mu_n$ as before, $\rho_0^{\leftarrow}$ is the generalized inverse of $\rho_0$ as defined in \eqref{eq:rho inv},
 and $(\epsilon_i), (E_i)$ and $(U_{i,n})$ are independent. The  form  \eqref{eq:M_2,n}  is in general  known as a series representation of  an infinitely divisible process  (e.g., \cite{rosinski:1990:series}, \cite[Section 3.4]{samorodnitsky:2016:stochastic}). It follows from  \cite[Theorem 3.4.3]{samorodnitsky:2016:stochastic} (see also \cite{rosinski:samorodnitsky:1999:product}) that   $M_{2,n}$ on $(A_n,\cl{E}_n)$ is also an infinitely divisible random measure  satisfying \eqref{eq:chf M}    with $\rho$   replaced by $\rho_0$, and hence
\begin{equation}\label{eq:M_1,n=M_2,n}
M_{1,n}(\cdot)\EqD  M_{2,n}(\cdot) 
\end{equation}
for fixed $n\in \bb{Z}_+$.  

We   introduce a third random measure $M_{3,n}$ defined as in \eqref{eq:M_2,n} using the same   $(\epsilon_i)$ and $(\Gamma_i)$ and $(U_{i,n})$, except that $\rho_0^{\leftarrow}$ is replaced by $\rho^{\leftarrow}$. Then as above $M_{3,n}$ is an infinitely divisible random measure satisfying \eqref{eq:chf M} and hence 
\begin{equation}\label{eq:M_3,n=M}
M_{3,n}(\cdot)\EqD M(\cdot),
\end{equation} 
where $M$ is as in Assumption \ref{ass:levy}  but restricted to the subspace $(A_n,\cl{E}_n)$.

Here we explain the reason we introduce these   random measures. 
In particular,
for an integrand $f:E^p\rightarrow \bb{R}$ as described in Section \ref{Sec:main} which is symmetric, bounded and has support within $A^p$, we   introduce for $1\le k\le n$ that
\begin{equation}\label{eq:X_{k,1}}
X_{k,1}^{(n)}:=\int_{E^p}' (f\circ T_p^k)(x_1,\ldots,x_p) M_{1,n}(dx_1)\ldots M_{1,n}(dx_p)   =  p!\sum_{I\in\cl{D}_p(N_n)}  \left(\prod_{i\in I} Z_i\right) (f\circ T_p^k) (U_{I,n})   ,
\end{equation}
where   $U_{I,n}=(U_{i_1,n},\ldots, U_{i_p,n})$,  and
\begin{equation}\label{eq:D_p(n)}
\cl{D}_p(n):=\{I=(i_1,\ldots,i_p):\ 1\le  i_1<\ldots<i_p \le n\}.
\end{equation}
To obtain the second equality in
 \eqref{eq:X_{k,1}},  we have used the  exclusion of the diagonals   of the multiple integral and   the symmetry of $f$. Meanwhile, we set
  for $1\le k\le n$ that
\begin{equation}\label{eq:X_{k,2}}
X_{k,2}^{(n)}:=\int_{E^p}' (f\circ T_p^k)(x_1,\ldots,x_p) M_{2,n}(dx_1)\ldots M_{2,n}(dx_p)   =  p!\sum_{ I \in \cl{D}_p} \left(\prod_{i\in I}\epsilon_i  \rho^{\leftarrow}_0(\Gamma_i/w_n)\right)  (f\circ T_p^k)(U_{I,n}) ,
\end{equation}
where
\begin{equation}\label{eq:D_p}
\cl{D}_p:=\{I=(i_1,\ldots,i_p):\ 1\le  i_1<\ldots<i_p \}.
\end{equation}
The multilinear series in \eqref{eq:X_{k,2}} converges unconditionally a.s., namely, regardless of the order  the terms are added,  the series converges a.s.\ to the same limit (cf.\ \cite[Section 1]{rosinski:samorodnitsky:1999:product}). In view of \eqref{eq:M_1,n=M_2,n} we have 
\begin{equation}\label{eq:X_k,1=X_k,2}
\left(X_{k,1}^{(n)}\right)_{1\le k\le n}\EqD \left(X_{k,2}^{(n)}\right)_{1\le k\le n}.
\end{equation}
At last, we let $X_{k,3}^{(n)}$ be defined as \eqref{eq:X_{k,2}} but with $M_{2,n}$ replaced by $M_{3,n}$, and in view of \eqref{eq:M_3,n=M}, we have  
\begin{equation}\label{eq:X_k,3=X_k}
\left(X_{k,3}^{(n)}\right)_{1\le k\le n}\EqD \left(X_k\right)_{1\le k\le n},
\end{equation}
where $(X_k)$ is as in \eqref{eq:X_k intro}. The idea is that by approximating $\rho$   with $\rho_0$, one can then  approximate  $\left(X_{k,3}^{(n)}\right)_{1\le k\le n}$   with $\left(X_{k,2}^{(n)}\right)_{1\le k\le n}$   (see Lemma \ref{Lem:reduction} below for more details). This enables one to work with eventually $\left(X_{k,1}^{(n)}\right)_{1\le k\le n}$, whose form  is more amenable to the computation of moments compared to $\left(X_{k,2}^{(n)}\right)_{1\le k\le n}$.

Next, we prepare some results which are useful for the main reduction lemma  below. They will also be useful in the proofs by the method of moments later.
For $I\in \cl{D}_p$ and $t\in [0,1]$, we define
\begin{equation}\label{eq:L_n}
L_{n,I,t}= \sum_{k=1}^{\floor{nt}}(f\circ T_p^k)(U_{I,n}).
\end{equation}
Recall that when $\beta<1-1/p$, the sum $\sum_{k}\mu^{ p}((f\circ T_p^k)\cdot f) $ converges due to Lemma \ref{Lem:SRD sum var}.
\begin{Lem}\label{Lem:SRD sum var}
When $\beta<1-1/p$, we have for any $I\in \cl{D}_p$,
\begin{equation*} 
\E [L_{n,I,t_1} L_{n,I,t_2}]\sim  n w_n^{-p} (t_1 \wedge t_2)  \sum_{k=-\infty}^{\infty}  \mu^{ p}((f\circ T_p^k)\cdot f) 
\end{equation*}
as $n\rightarrow\infty$. In addition for any $ 1\le m\le n$,
\[
\E\left[\left(\sum_{k=1}^{m}(f\circ T_p^k)(U_{I,n})\right)^2\right]\le  w_n^{-p} m \sum_{k=-\infty}^{\infty}  |\mu^{ p}((f\circ T_p^k) f) |.
\]
\end{Lem}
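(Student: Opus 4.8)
The plan is to reduce everything to an exact combinatorial formula for the covariance of the sums $L_{n,I,t}$ and then carry out a standard summable-covariance asymptotic. The starting observation is that for fixed $I=(i_1,\ldots,i_p)\in\cl{D}_p$ the vector $U_{I,n}=(U_{i_1,n},\ldots,U_{i_p,n})$ consists of $p$ independent draws from $\mu_n$, hence has law $\mu_n^{\otimes p}=\mu^{p}(\cdot)/w_n^p$ on $A_n^p$. Expanding the product and taking expectations termwise, I would first obtain
\begin{equation*}
\E[L_{n,I,t_1}L_{n,I,t_2}]=\frac{1}{w_n^p}\sum_{k=1}^{\floor{nt_1}}\sum_{l=1}^{\floor{nt_2}}\int_{A_n^p}(f\circ T_p^k)\cdot(f\circ T_p^l)\,d\mu^p ,
\end{equation*}
which is an identity, valid before any asymptotics.

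The key step is to evaluate each integral. Since $f$ is supported in $A^p$, the integrand $(f\circ T_p^k)\cdot(f\circ T_p^l)$ is supported in $(T^{-k}A)^p\cap(T^{-l}A)^p$, and for $1\le k,l\le n$ this is contained in $A_n^p=\big(\bigcup_{j=1}^nT^{-j}A\big)^p$; it is exactly here that the constraint $k,l\le\floor{nt}\le n$ is used. Consequently the domain $A_n^p$ may be enlarged to $E^p$ without changing the value, and the $\mu^p$-invariance of $T_p$ then yields $\int_{E^p}(f\circ T_p^k)(f\circ T_p^l)\,d\mu^p=\mu^{p}\big((f\circ T_p^{|l-k|})\cdot f\big)$. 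Writing $\gamma_m:=\mu^{p}\big((f\circ T_p^{|m|})\cdot f\big)$ (a symmetric, $I$-independent sequence), this produces the clean identity
\begin{equation*}
\E[L_{n,I,t_1}L_{n,I,t_2}]=\frac{1}{w_n^p}\sum_{k=1}^{\floor{nt_1}}\sum_{l=1}^{\floor{nt_2}}\gamma_{|l-k|}.
\end{equation*}
By Corollary \ref{Cor:cov} and regular variation, $\beta<1-1/p$ forces $p(1-\beta)>1$, so $\gamma_m\sim\mu^p(f)^2 b_m^{-p}$ is summable and $\Sigma:=\sum_{m\in\bb{Z}}\gamma_{|m|}=\sum_{k=-\infty}^{\infty}\mu^{p}((f\circ T_p^k)\cdot f)<\infty$.

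For the first assertion, assuming WLOG $t_1\le t_2$, I would evaluate the double sum of the summable kernel $\gamma_{|l-k|}$ over the rectangle $[1,\floor{nt_1}]\times[1,\floor{nt_2}]$: for each fixed $k$ the inner sum $\sum_{l=1}^{\floor{nt_2}}\gamma_{|l-k|}$ is uniformly bounded by $\Sigma^{*}:=\sum_m|\gamma_{|m|}|$ and converges to $\Sigma$ for all but $o(\floor{nt_1})$ indices $k$, so a Ces\`aro/dominated-convergence argument gives the double sum $\sim\floor{nt_1}\,\Sigma\sim n(t_1\wedge t_2)\Sigma$, whence $\E[L_{n,I,t_1}L_{n,I,t_2}]\sim nw_n^{-p}(t_1\wedge t_2)\Sigma$, which is exactly the claim. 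The second assertion is then immediate from the same exact identity with $t_1=t_2=m/n$ and the crude bound $\sum_{k=1}^{m}\sum_{l=1}^{m}|\gamma_{|l-k|}|\le m\,\Sigma^{*}$, giving $\E[(\sum_{k=1}^m (f\circ T_p^k)(U_{I,n}))^2]\le w_n^{-p}m\,\Sigma^{*}$. I expect the main point requiring care to be the reduction identity via the support containment and $T_p$-invariance (the one place the structure of $A_n$ and the range $k,l\le n$ genuinely enter); the subsequent double-sum asymptotics is routine but still needs the standard check that the boundary indices contribute only $o(n)$.
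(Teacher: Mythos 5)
Your proposal is correct and takes essentially the same route as the paper: both reduce $\E[L_{n,I,t_1}L_{n,I,t_2}]$ to the exact identity $w_n^{-p}\sum_{k,l}\mu^{p}\bigl((f\circ T_p^{|l-k|})\cdot f\bigr)$ (your explicit support-containment-in-$A_n^p$ and $T_p$-invariance step is precisely what the paper compresses into its one-line definition of $\gamma_n(k)$), then use summability of these covariances from Lemma \ref{Lem:Cov Asymp} and a standard evaluation of the double sum, with the second claim obtained by the same crude bound in both. The only cosmetic difference is bookkeeping: the paper splits the rectangle into a square plus a strip and cites \cite[Lemma 5.4.4]{pipiras:2017:long} for the strip, while your self-contained Ces\`aro argument handles the full rectangle directly.
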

\begin{proof}
For $-n\le k\le n$, set
\[
\gamma_{n}(k):=\E \left[ (f\circ T_p^k)(U_{I,n})\cdot  f(U_{I,n})\right]= w_n^{-p}\mu^{p}((f\circ T_p^k)\cdot f ).
\]
Assume without loss of generality that $0<t_1\le t_2\le 1$. We now focus on the first claim.
Using the invariance $\mu^p(T_p^{-1}\cdot )=\mu_p(\cdot )$, we have
\begin{align*}
\E [L_{n,I,t_1} L_{n,I,t_2}]&= \sum_{k_1=1}^{\floor{nt_1}} \sum_{k_2=1}^{\floor{nt_2}} \gamma_n(k_2-k_1)=\sum_{k_1=1}^{\floor{nt_1}} \sum_{k_2=1}^{\floor{nt_1}} \gamma_n(k_2-k_1) +\sum_{k_1=1}^{\floor{nt_1}} \sum_{k_2=\floor{nt_1}+1}^{\floor{nt_2}} \gamma_n(k_2-k_1)\\&=:\mathrm{I}_n+\mathrm{II}_n,
\end{align*}
where the   term $\mathrm{II}_n$ is understood as zero if $\floor{nt_1}=\floor{nt_2}$. By Lemma \ref{Lem:Cov Asymp}, we have $\mu^{ p}((f\circ T_p^k) \cdot f)\sim b_k^{-p} \mu^{ p}(f)^2 $ as $k\rightarrow\infty$ which belongs to $\RV_\infty(p(\beta-1))$ with  $p(\beta-1)<-1$, and hence $\sum_{k=-\infty}^{\infty}  |\mu^{ p}((f\circ T_p^k)\cdot f)|<\infty $.  It  then follows from  \cite[Lemma 5.4.4]{pipiras:2017:long} that $\mathrm{II}_n=w_n^{-p}o(n)$ as $n\rightarrow\infty$. In addition, by rearranging the double sum, 
\begin{align*}
\mathrm{I}_n =  n w_n^{-p}   t_1\sum_{k=-\floor{nt_1}}^{\floor{nt_1}} \left(\frac{\floor{n t_1}}{nt_1}- \frac{|k|}{nt_1}\right)\mu^{p}((f\circ T_p^k)\cdot f ).
\end{align*}
The first claim follows if one shows that the sum above converges to  $\sum_{k=-\infty}^{\infty}  \mu^{ p}((f\circ T_p^k)\cdot f) $ as $n\rightarrow\infty$. This can be verified by the dominated convergence theorem since    $\sum_{k=-\infty}^{\infty}  |\mu^{ p}((f\circ T_p^k)\cdot f)|<\infty $.

Now we turn to the second claim. For any $ 1\le m\le n$, we have
\begin{align*}
\E\left[\left(\sum_{k=1}^{m}(f\circ T_p^k)(U_{I,n})\right)^2\right] =  m\sum_{k=-m}^{m} \left(1-\frac{|k|}{m} \right) \gamma_n(|k|).
\end{align*}
The conclusion follows  since    $\left|\left(1-\frac{|k|}{m} \right) \gamma_n(|k|)\right|\le |\gamma_n(|k|)|$ for $-m\le k\le m$.

\end{proof}

  Introduce for $q\ge 2$, a symmetric function $h_q^{(\beta)} $  which is a.e.\ defined on $(0,1)^q$ as
\begin{equation}\label{eq:hq}
h_q^{(\beta)}(x_1,\dots, x_q) = \Gamma(\beta)\Gamma(2-\beta)\prod_{j=2}^q (x_j-x_{j-1})^{\beta-1},\  0<x_1<\cdots<x_q<1.
\end{equation}
Define also $h_0^{(\beta)}:=1$ and $h_0^{(\beta)}(x):=\Gamma(\beta)\Gamma(2-\beta)$. For $r\in \bb{Z}_+$, define
\begin{equation}\label{eq:cl I}
\cl{I}(i)=\{\ell\in \{1,\ldots,r\}:\ i\in I_\ell \}.
\end{equation}
\begin{Lem}\label{Lem:moments LRD}[\cite[Proposition 5.3]{bai:2020:functional}.] 
When $\beta\in (1-1/p,1)$, we have
for any $I_1,\ldots,I_r \in \cl{D}_p$, $t_1,\ldots,t_r\in [0,1]$ that
\begin{align}\label{eq:moment sum}
\lim_n \E  \prod_{\ell =1}^r \left(\frac{b_n^{p}}{n}  L_{n,I_\ell,t_{\ell}}\right)  = \mu^{p}(f)^{r} \int_{(\mbf{0},\mbf {t})} \prod_{i=1}^K  h_{|\cl{I}(i)|}^{(\beta)} (\mbf{x}_{\cl{I}(i)})  d\mbf{x}  
\end{align}
 where $(\mbf{0},\mbf {t})=(0,t_1)\times\ldots\times (0,t_r)$,   $\mbf{x}_{\cl{I}(i)}$ is the subvector of $\mbf{x}:=(x_1,\ldots,x_r)$  indexed by $\cl{I}(i)$   (because each $h_{q}^{(\beta)}$ is symmetric, the order of variables in $\mbf{x}_{\cl{I}(i)}$ does not matter), $d\mbf{x}=dx_1\ldots dx_r$, and
  $K = \max(\bigcup_{\ell=1}^r I_\ell)$.
\end{Lem}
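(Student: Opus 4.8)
The plan is to expand the product of partial sums and reduce the claim to the asymptotics of a multiple sum of spatial expectations. For a time vector $\mbf k=(k_1,\dots,k_r)$ write $E_n(\mbf k)=\E\prod_{\ell=1}^r (f\circ T_p^{k_\ell})(U_{I_\ell,n})$; by the multilinearity of \eqref{eq:L_n},
\[
\E\prod_{\ell=1}^r\left(\frac{b_n^p}{n}\,L_{n,I_\ell,t_\ell}\right)=\left(\frac{b_n^p}{n}\right)^r\sum_{k_1=1}^{\floor{nt_1}}\cdots\sum_{k_r=1}^{\floor{nt_r}}E_n(\mbf k).
\]
Since the $U_{i,n}$ are i.i.d.\ with law $\mu_n=\mu|_{\cl E_n}/w_n$, $E_n(\mbf k)$ is a $\mu_n^{\otimes m}$-integral of $\prod_\ell (f\circ T_p^{k_\ell})$ over the $m:=\big|\bigcup_{\ell} I_\ell\big|$ variables actually appearing; the indices $i\le K$ with $i\notin\bigcup_\ell I_\ell$ do not occur and correspond to the trivial factors $h_0^{(\beta)}=1$ in the statement.

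The heart of the argument is an asymptotic factorization of $E_n(\mbf k)$. For each shared index $i$, sort the times $\{k_\ell:\ell\in\cl I(i)\}$ and let $g_{i,1},g_{i,2},\dots$ be their consecutive gaps; I would show that, away from the near-diagonal region,
\[
E_n(\mbf k)\ \approx\ \mu^p(f)^r\, w_n^{-m}\prod_{i=1}^K\ \prod_{j\ge 2} b_{\,g_{i,j}}^{-1}.
\]
For $p=1$ this follows by integrating out the $U_{i,n}$ separately and iterating the dual-operator asymptotics: transferring the earliest gap $g$ onto the leading factor via \eqref{eq:dual} turns it into $\wh T^{g}f\approx b_g^{-1}\mu(f)$ by \eqref{eq:uniform ret}, which simultaneously produces one factor $b_g^{-1}$ and splits off a $\mu(f)$, after which one recurses on the shorter product. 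For $p\ge 2$ the $p$ coordinates inside each $I_\ell$ are coupled by $f$, so the decoupling is instead driven by the product-space mixing relation of Lemma \ref{Lem:Cov Asymp}, which separates two consecutive $p$-blocks at cost $b_g^{-p}=(b_g^{-1})^p$, i.e.\ one $b_g^{-1}$ per shared coordinate; partially shared blocks are handled by first integrating out the unshared coordinates and then applying the same estimate on the shared part. Finally, \eqref{eq:b_n w_n} gives $w_n^{-1}\sim\Gamma(\beta)\Gamma(2-\beta)\,b_n^{-1}$, and the resulting constant $\Gamma(\beta)\Gamma(2-\beta)$ attaches one copy to each index $i$, matching the prefactor of $h_{|\cl I(i)|}^{(\beta)}$ in \eqref{eq:hq}.

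With the factorization established, the last step is to read the multiple sum as a Riemann sum. Substituting $k_\ell=\floor{n x_\ell}$ and using that $(b_k)\in\RV_\infty(1-\beta)$, hence $(b_k^{-1})\in\RV_\infty(\beta-1)$, each gap factor $b_{g_{i,j}}^{-1}$ contributes $|x_{\ell_j}-x_{\ell_{j-1}}|^{\beta-1}$ in the limit, which is exactly the $j$-th factor of $h_{|\cl I(i)|}^{(\beta)}(\mbf x_{\cl I(i)})$. The scaling is consistent: there are $r$ free time variables and $\sum_i(|\cl I(i)|-1)=rp-m$ gap factors, and by regular variation the net power of $n$ and the slowly varying factor in $(b_n^p/n)^r w_n^{-m}\prod b_{g}^{-1}$ cancel, leaving
\[
\mu^p(f)^r\int_{(\mbf 0,\mbf t)}\prod_{i=1}^K h_{|\cl I(i)|}^{(\beta)}(\mbf x_{\cl I(i)})\,d\mbf x.
\]

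I expect two related points to be the main obstacles. The first is making the factorization uniform over the entire time simplex, in particular in the near-diagonal regions where two times in some $\cl I(i)$ coincide or nearly coincide and $b_g^{-1}$ is large (or undefined at $g=0$): one needs a uniform domination such as $\mu^p((f\circ T_p^k)\cdot f)\le c\,b_k^{-p}$ so that the collision and small-gap contributions are negligible and the dominated convergence theorem applies to the Riemann sums. The second is the finiteness of the limiting integral: when two index sets share $s$ coordinates the kernel has a diagonal singularity of order $s(\beta-1)$, whose worst case $s=p$ is integrable precisely because $\beta>1-1/p$, while joint integrability across all diagonals needs an inductive Hölder-type estimate. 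This is the step that uses the hypothesis $\beta\in(1-1/p,1)$, and it explains why the same computation breaks down below that threshold.
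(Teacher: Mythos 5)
First, a point of comparison: the paper does not prove this lemma at all --- it is imported verbatim from \cite[Proposition 5.3]{bai:2020:functional} --- so your attempt has to be measured against what that external proof actually must accomplish. Your overall architecture matches it: expanding the product into a sum over $(k_1,\ldots,k_r)$, establishing an asymptotic factorization of $\E\prod_{\ell}(f\circ T_p^{k_\ell})(U_{I_\ell,n})$ into per-coordinate gap factors, passing to a Riemann sum via regular variation, dominating the near-diagonal region by a Potter-type bound (exactly the role of Lemma \ref{Lem:potter}), and using $\beta>1-1/p$ for integrability of the limit kernel. Your scaling bookkeeping is also correct: with $m=|\bigcup_\ell I_\ell|$ active coordinates and $rp-m$ gap factors, $(b_n^p/n)^r\, n^r\, w_n^{-m}\, b_n^{-(rp-m)}=(b_n/w_n)^m\to[\Gamma(\beta)\Gamma(2-\beta)]^m$ by \eqref{eq:b_n w_n}, matching one constant prefactor per active coordinate, and your $p=1$ recursion via \eqref{eq:dual} and \eqref{eq:uniform ret} is essentially the standard argument.

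The genuine gap is the decoupling step for $p\ge 2$. The two tools you offer --- Lemma \ref{Lem:Cov Asymp}, which separates factors shifted by one \emph{common} lag $T_p^g$ at cost $b_g^{-p}$, and exact integration of unshared coordinates --- do not suffice, because the limit kernel assigns to each coordinate $i$ gaps measured within its own activation set $\cl{I}(i)$, and these lags differ across coordinates. Take $p=2$, $r=3$, $I_1=\{1,2\}$, $I_2=\{2,3\}$, $I_3=\{1,3\}$ with $k_1<k_2<k_3$: every coordinate is shared, so there is nothing to integrate out, and the required asymptotics is $w_n^{-3}\,b_{k_2-k_1}^{-1}\,b_{k_3-k_2}^{-1}\,b_{k_3-k_1}^{-1}\,\mu^2(f)^3$ --- three \emph{distinct} lags, one per coordinate. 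A same-lag mixing step can only produce powers of a single $b_g^{-1}$, and since $(b_n)$ is regularly varying but not multiplicative, $b_{k_3-k_1}^{-1}$ cannot be recovered as $b_{k_2-k_1}^{-1}b_{k_3-k_2}^{-1}$ by composing two consecutive-gap steps. Nor does your $p=1$ recursion transplant: applying duality jointly in all coordinates would require $\wh{T}^{\,g}$ of the product system to act on a function that is constant in the currently inactive coordinates, which fails to be in $L^1(\mu^m)$ on an infinite measure space. What the cited proof actually supplies is a multi-lag uniform mixing statement on the product space, roughly $\mu^m\bigl(F\cdot(G\circ(T^{n_1}\times\cdots\times T^{n_m}))\bigr)\sim b_{n_1}^{-1}\cdots b_{n_m}^{-1}\mu^m(F)\mu^m(G)$, with uniformity strong enough to iterate along the time-ordered blocks while tracking, for each coordinate, the time of its \emph{next} activation; this is the missing key lemma in your sketch. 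The remaining ingredients you name --- Potter domination with exponent $b\in(\beta-1,0)$ so that $pb>-1$, and the observation that the worst diagonal singularity $|x-y|^{p(\beta-1)}$ is integrable precisely when $\beta>1-1/p$ --- are sound.
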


We are now ready to state the  main reduction lemma.
\begin{Lem}\label{Lem:reduction}
Suppose the convergence  of finite-dimensional distributions in Theorem \ref{Thm:CLT} or \ref{Thm:nCLT} hold  for any    symmetric L\'evy measure $\rho=\rho_0$   so that  (recall \eqref{eq:levy moment equiv})
\begin{equation}\label{eq:rho_0 moments}  
\int_{\bb{R}} x^r \rho_0(dx) <\infty   \ \text{ for any }r\ge 0,\ \text{ and }   \int_{\bb{R}} x^2 \rho_0(dx)=\|\rho_0^{\leftarrow}\|_{L^2(\bb{R}_+)}^2=1.
\end{equation}
 Then the corresponding convergence  of finite-dimensional distributions also hold for general $\rho$ satisfying Assumption \ref{ass:levy}.
\end{Lem}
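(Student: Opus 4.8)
\emph{Strategy.} The plan is to exploit the coupling built in Section~\ref{sec:red}. The processes $\left(X_{k,2}^{(n)}\right)$ and $\left(X_{k,3}^{(n)}\right)$ of \eqref{eq:X_{k,2}} live on a common probability space, being driven by the \emph{same} $(\epsilon_i)$, $(\Gamma_i)$, $(U_{i,n})$ and differing only in that the radial function is $\rho_0^{\leftarrow}$ for the former and $\rho^{\leftarrow}$ for the latter. Since $\left(X_{k,3}^{(n)}\right)_{1\le k\le n}\EqD(X_k)_{1\le k\le n}$ by \eqref{eq:X_k,3=X_k} and $\left(X_{k,2}^{(n)}\right)_{1\le k\le n}\EqD\left(X_{k,1}^{(n)}\right)_{1\le k\le n}$ by \eqref{eq:X_k,1=X_k,2}, it suffices to approximate the ``general'' process $\left(X_{k,3}^{(n)}\right)$ by the ``nice'' one $\left(X_{k,2}^{(n)}\right)$ along a sequence $\rho_0=\rho_0^{(m)}$ of admissible L\'evy measures, and to transfer the assumed limit by a converging-together argument. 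The crucial preliminary observation is that the targets of Theorems~\ref{Thm:CLT} and \ref{Thm:nCLT} --- the constant $\sigma^2$, the process $\mu^p(f)Z_{p,\beta}$, and the normalizations $n^{1/2}$ and $a_n$ (write $d_n$ for whichever is pertinent) --- depend on the driving measure only through $\int x^2\rho(dx)=1$, which holds for $\rho$ and for every $\rho_0^{(m)}$ (see Corollary~\ref{Cor:cov} and \eqref{eq:a_n}). Hence all approximating problems share one common target limit, and the hypothesis serves only to supply that limit for each $\rho_0^{(m)}$.

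\emph{Construction of the approximants.} I take $\rho_0^{(m)}$ to be the restriction of $\rho$ to $\{1/m<|x|<m\}$, normalized to unit second moment (the normalizing constant tends to $1$ because $\int x^2\rho(dx)=1$). Since its support is bounded away from $0$ and $\infty$, $\rho_0^{(m)}$ is a finite, hence compound Poisson, symmetric L\'evy measure, and its inverse $\phi_m:=\left(\rho_0^{(m)}\right)^{\leftarrow}$ is bounded with bounded support, so $\rho_0^{(m)}$ has moments of all orders by Lemma~\ref{Lem:rho moment}; thus \eqref{eq:rho_0 moments} holds and the assumed f.d.d.\ convergence applies to $\rho_0^{(m)}$. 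As the restrictions increase to $\rho$ and the normalizing constants tend to $1$, the inverses converge, $\phi_m\to\phi:=\rho^{\leftarrow}$ in $L^2(\bb{R}_+)$ (Lemma~\ref{Lem:rho moment} and monotone convergence), so that $\delta_m:=\|\phi_m-\phi\|_{L^2}^2\to0$ and $\langle\phi,\phi_m\rangle_{L^2}=1-\delta_m/2$.

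\emph{The key estimate.} Fix $\rho_0=\rho_0^{(m)}$ and put $D_k=X_{k,3}^{(n)}-X_{k,2}^{(n)}$. I claim the difference process is, in covariance, a vanishing multiple of the original one. Expanding the two multilinear series in \eqref{eq:X_{k,2}}, the Rademacher factors enforce $\E[(\prod_{i\in I}\epsilon_i)(\prod_{i'\in I'}\epsilon_{i'})]=1_{\{I=I'\}}$, killing all off-diagonal terms; using the independence of $(\Gamma_i)$ and $(U_{i,n})$ together with the fact that $\E[(f\circ T_p^k)(U_{I,n})\,(f\circ T_p^{k'})(U_{I,n})]=w_n^{-p}\mu^p((f\circ T_p^{k'-k})\cdot f)$ is the same for every $I\in\cl{D}_p$, a factorial-moment (Mecke) computation for the Poisson points $\{\Gamma_i/w_n\}$ turns the sum over $\cl{D}_p$ into a $p$-th power of an $L^2$ inner product and yields
\[
\E[D_kD_{k'}]=p!\,\big(2-2\langle\phi,\phi_m\rangle_{L^2}^{\,p}\big)\,\mu^p\big((f\circ T_p^{k'-k})\cdot f\big)=\kappa(\delta_m)\,\E[X_kX_{k'}],
\]
where $\kappa(\delta_m)=2-2(1-\delta_m/2)^p\to0$. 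Consequently $\Var\big(\sum_{k=1}^{\floor{nt}}D_k\big)=\kappa(\delta_m)\,\Var\big(\sum_{k=1}^{\floor{nt}}X_k\big)$. Since $d_n^{-2}\Var\big(\sum_{k=1}^{\floor{nt}}X_k\big)$ stays bounded in $n$ --- indeed it converges to the variance of the target at time $t$, by Corollary~\ref{Cor:cov} with Lemma~\ref{Lem:SRD sum var} in the setting of Theorem~\ref{Thm:CLT} and with Karamata's theorem in that of Theorem~\ref{Thm:nCLT} --- Chebyshev's inequality gives, for each $\varepsilon>0$,
\[
\lim_{m\to\infty}\limsup_{n\to\infty}P\Big(d_n^{-1}\Big|\sum_{k=1}^{\floor{nt}}D_k\Big|>\varepsilon\Big)=0,
\]
and the same bound applies coordinatewise to the finitely many times of an f.d.d.\ statement.

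\emph{Conclusion and main obstacle.} With one target limit common to all $\rho_0^{(m)}$ and the error controlled as above, a standard converging-together theorem (Billingsley, \emph{Convergence of Probability Measures}, Theorem~3.2) upgrades the $\rho_0^{(m)}$-f.d.d.\ convergence to f.d.d.\ convergence of $\left(X_{k,3}^{(n)}\right)\EqD(X_k)$, which is the claim. The step I expect to demand the most care is the covariance identity for the \emph{coupled} difference $D_k$: as $M_{2,n}$ and $M_{3,n}$ are not independent, the isometry \eqref{eq:L^2 iso} is unavailable for $D_k$, and the cancellation must be drawn directly out of the series representation. The fortunate feature is that Rademacher orthogonality and the Poisson structure collapse everything to the single scalar $\langle\phi,\phi_m\rangle_{L^2}$, so that only a second-moment bound --- rather than the full method of moments exploited later --- is required, which is what makes this reduction comparatively robust.
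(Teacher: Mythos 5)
Your proof is correct and takes essentially the same route as the paper's: the same series-representation coupling of $X_{k,2}^{(n)}$ and $X_{k,3}^{(n)}$ on one probability space, a second-moment bound on the coupled difference of partial sums obtained from Rademacher orthogonality together with the Poisson moment formula and the variance bounds of Lemmas \ref{Lem:SRD sum var} and \ref{Lem:moments LRD}, and a converging-together argument, with the common target limit guaranteed by the unit second-moment standardization. The only cosmetic differences are that you derive the exact covariance identity $\E[D_kD_{k'}]=\kappa(\delta_m)\,\E[X_kX_{k'}]$ with $\kappa(\delta_m)=2-2\langle\phi,\phi_m\rangle_{L^2}^{p}$, where the paper settles for the telescoping bound $\|g_0-g_p\|_{L^2(\bb{R}_+^p)}\le p\,(\|\rho^{\leftarrow}\|_{L^2(\bb{R}_+)}+1)^{p-1}\epsilon$, and that you construct $\rho_0$ by truncating and renormalizing $\rho$ where the paper approximates $\rho^{\leftarrow}$ directly by a simple function in $L^2(\bb{R}_+)$.
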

\begin{proof}
Fix the L\'evy measure $\rho$ as in Assumption \ref{ass:levy}. 
For any $\epsilon\in (0,1)$, there exists  a symmetric L\'evy measure $\rho_0=\rho_0^{(\epsilon)}$ satisfying \eqref{eq:rho_0 moments} such that (recall the generalized inverse in \eqref{eq:rho inv})
\begin{equation}\label{eq:approx rho_0 rho}
\|\rho^{\leftarrow}-\rho^{\leftarrow}_0 \|_{L^2(\bb{R}_+)}\le \epsilon.
\end{equation}
Indeed, it is not difficult to construct the desired $\rho^{\leftarrow}_0$    as a right-continuous non-increasing simple function with a  bounded support. 
Define
\[
W_{n,j}(t):=  \sum_{k=1}^{\floor{nt}} X_{k,j}^{(n)},\quad j=2,3,\quad t\in [0,1],
\]
where $X_{k,2}^{(n)}$ and $X_{k,3}^{(n)}$ are as introduced  in \eqref{eq:X_{k,2}}  corresponding to $\rho_0$ and $\rho$ respectively. Recall from \eqref{eq:X_k,3=X_k}  that $(X_{k,3}^{(n)},\ k=1,\ldots,n)\EqD (X_k,\ k=1,\ldots,n)$, the latter being the stationary sequence in Theorems \ref{Thm:CLT} or \ref{Thm:nCLT}. On the other hand, $(X_{k,2}^{(n)},\ k=1,\ldots,n)\EqD (X^{(\epsilon)}_k,\ k=1,\ldots,n)$, the latter being a stationary sequence defined using a multiple stochastic integral as   $(X_k,\ k=1,\ldots,n)$, but with the only difference that the L\'evy measure $\rho$ of the random measure $M$ is replaced by $\rho_0$. 
Then using the orthogonality induced by $\prod_{i\in I} \epsilon_i$ and independence, we have
\begin{align}
\E |W_{n,2}(t)- W_{n,3}(t)|^2&=  (p!)^2 \E \left| \sum_{ I \in \cl{D}_p} \left(\prod_{i\in I}\epsilon_i \right) \left(\prod_{i\in I}\rho_0^{\leftarrow}(\Gamma_i/w_n)- \prod_{i\in I}\rho^{\leftarrow}(\Gamma_i/w_n)\right) \left(\sum_{k=1}^{\floor{nt}} (f\circ T_p^k)(U_{I,n})\right)\right|^2 \notag\\ & =   (p!)^2   \left(\E \sum_{I\in \cl{D}_p}\left|\prod_{i\in I}\rho_0^{\leftarrow}(\Gamma_i/w_n)- \prod_{i\in I}\rho^{\leftarrow}(\Gamma_i/w_n)\right|^2\right) \E L_{n,I_0,t} ^2, \label{eq:coupling diff} 
\end{align}
where $I_0$ is an arbitrary fixed element of $\cl{D}_p$ and $L_{n,I_0,t}$ is as in \eqref{eq:L_n}. 
Set a sequence of $p$-variate functions as
 \[g_q(x_1,\ldots,x_p)=\prod_{i=1}^q \rho^{\leftarrow}(x_i) \prod
_{i=q+1}^p \rho^{\leftarrow}_0(x_i),\quad q=0,\ldots,p,  \]
where a product is understood as $1$ if the starting index exceeds the ending index.  Note that the sum in the first expectation  in \eqref{eq:coupling diff}  can be viewed as $\xi^p(h)/p!$, where $\xi^p(h):=\int_{\bb{R}_+^p}' h(x_1,\ldots,x_p)\xi(dx_1)\ldots \xi(dx_p) $ is an off-diagonal multiple integral on $\bb{R}_+^p$ of the function $h:=|g_0-g_p|^2$ with respect to the Poisson random measure $\xi:=\sum_{i=1}^\infty \delta_{\Gamma_i}$.   According to  \cite[Lemma 10.1(i)]{kallenberg:2017:random}, the expectation $\E \xi^p(h)=\int_{\bb{R}_+^p}  h(x_1,\ldots,x_p) dx_1\ldots  dx_p$. Applying this with 
 triangular inequalities, \eqref{eq:approx rho_0 rho} and the fact $\epsilon\le 1$, we have 
\begin{align*}
&p!\E \sum_{I\in \cl{D}_p}\left|\prod_{i\in I}\rho_0^{\leftarrow}(\Gamma_i/w_n)- \prod_{i\in I}\rho^{\leftarrow}(\Gamma_i/w_n)\right|^2=  w_n^p \|g_0-g_p\|_{L^2(\bb{R}_+^p)}^2\le  w_n^p  \left(\sum_{q=0}^{p-1} \|g_q-g_{q+1}\|_{L^2(\bb{R}_+^p)}\right)^2 \\&\le  w_n^p \left( p   \max(\|\rho_0^{\leftarrow}\|_{L^2(\bb{R}_+)},\|\rho^{\leftarrow}\|_{L^2(\bb{R}_+)} )^{p-1} \|\rho^{\leftarrow}-\rho^{\leftarrow}_0 \|_{L^2(\bb{R}_+)}\right)^2
\le  w_n^p p^2  \left(\|\rho^{\leftarrow}\|_{L^2(\bb{R}_+)}+1\right)^{2p-2}\epsilon^2.
\end{align*}
On the other hand,  by  Lemmas \ref{Lem:SRD sum var}, \ref{Lem:moments LRD} and  \eqref{eq:b_n w_n}, there exists a constant $c>0$ which does not depend on $n$ or $\epsilon$ that
\begin{align*}
  \E L_{n,I_0,t} ^2 \le 
  \begin{cases}
 c nw_n^{-p} &\text{ if } \beta<1-1/p,\\
 c n^2 w_n^{-2p}&  \text{ if } \beta>1-1/p.
  \end{cases}
\end{align*}
Hence returning  to \eqref{eq:coupling diff}, for some constant $c>0$ which does not depend on $n,\epsilon$ or $\rho_0$, we have
 \begin{align*}
 \E |W_{n,2}(t)- W_{n,3}(t)|^2 \le 
  \begin{cases}
c n \epsilon^2 &\text{ if } \beta<1-1/p,\\
 c n^2 w_n^{-p} \epsilon^2 &  \text{ if } \beta>1-1/p.
  \end{cases}
\end{align*}
Hence if $\beta<1-1/p$, we have  
\begin{equation}\label{eq:norm sum diff L2}
 \E |n^{-1/2}  W_{n,2}(t)- n^{-1/2} W_{n,3}(t)|^2\le c \epsilon^2.
\end{equation}
Now by assumption \[(n^{-1/2}  W_{n,3}(t))\ConvFDD (\sigma  B(t)),\] where $B(t)$ is a standard Brownian motion and \[\sigma^2=\sum_{k=-\infty}^\infty\E[X_kX_0]=\sum_{k=-\infty}^\infty \E[ X_k^{(\epsilon)} X_0^{(\epsilon)}].\] 
Note that here $\E[X_kX_0]=\E[ X_k^{(\epsilon)} X_0^{(\epsilon)}]$. This is because although $\rho_0$ can be different from $\rho$,  the same $L^2$ isometry relation \eqref{eq:L^2 iso}   holds for both   since they are standardized (see \eqref{eq:2nd moment unit} and \eqref{eq:rho_0 moments}).
  Hence the conclusion for the case $\beta<1-1/p$ follows from a well-known   approximation argument (e.g., \cite[Theorem 4.28]{kallenberg:2002:foundations}).
 The case $\beta>1-1/p$ is similar with the normalization  $n^{1/2}$ in \eqref{eq:norm sum diff L2} replaced by $a_n$ in \eqref{eq:a_n}. Note that $a_n\sim c n w_n^{-p}$ in view of \eqref{eq:b_n w_n} as $n\rightarrow\infty$

\end{proof}

In view of Lemma \ref{Lem:reduction} and \eqref{eq:X_k,1=X_k,2}, it suffices to prove the convergences of finite-dimensional distributions in Theorems \ref{Thm:CLT} and \ref{Thm:nCLT} with $(X_k)$ replaced by $(X_{k,1})$, the latter being defined by a compound Poisson random measure  with all moments finite.   This will be the objective  of Sections \ref{sec:proof CLT} and \ref{sec:proof NCLT} below.

\subsection{Proof of the central limit theorem}\label{sec:proof CLT}

Assume $p(\beta-1)<-1$.
Using $(X_{k,1}^{(n)})_{1\le k\le n}$ in \eqref{eq:X_{k,1}}, 
we define in this subsection
\begin{equation}\label{eq:S_n(t) CLT}
\left( S_n(t) \right)_{t\in [0,1]}:=\left( \frac{1 }{\sqrt{n}}\sum_{k=1}^{\floor{nt}}X_{k,1}^{(n)}\right)_{t\in [0,1]}=\left(   p! n^{-1/2}\sum_{I\in\cl{D}_p(N_n)} \left(\prod_{i\in I} Z_i\right) L_{n,I,t} \right)_{t\in [0,1]},
\end{equation}
where    $N_n=N(Qw_n)$ and
\[
L_{n,I,t}=\sum_{k=1}^{\floor{nt}} (f\circ T_p^k) (U_{I,n}).
\]
We need the following lemma when employing the method of moments.
\begin{Lem}\label{Lem:potter}
Under the assumptions in Section \ref{Sec:main},
for any $q\ge 2$ and any  $b\in (\beta-1,0)$,  there exists a constant $c>0$ which does not depend on $k_1,\ldots,k_q$,  such that
\[
\mu \left( \bigcap_{j=1}^q T^{-k_j} A \right)\le c (k_2-k_1)_1^{b}(k_3-k_2)_1^{b}\ldots (k_q-k_{q-1})^b_1
\]
for all  $1\le k_1\le\ldots \le k_q$,   
where   $(x)_1^{b}:=(x\vee 1)^b$.
\end{Lem}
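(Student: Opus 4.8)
The plan is to prove the bound by induction on $q$, peeling off one index at a time using the invariance of $\mu$, the dual property \eqref{eq:dual}, and the uniform return \eqref{eq:uniform ret}, and then converting the resulting regularly varying factors into powers via Potter's bounds. First I would rewrite $\mu(\bigcap_{j=1}^q T^{-k_j}A)=\int_E \prod_{j=1}^q (1_A\circ T^{k_j})\,d\mu$ and use the measure-preservation of $T$ to factor out $T^{k_1}$, reducing to the case $k_1=0$ (harmless since the right-hand side does not involve $k_1$, only the gaps $k_j-k_{j-1}$). Writing $k_j=k_2+(k_j-k_2)$ for $j\ge 2$ so that $\prod_{j=2}^q 1_A\circ T^{k_j}=(\prod_{j=2}^q 1_A\circ T^{k_j-k_2})\circ T^{k_2}$, the iterated dual property \eqref{eq:dual} gives
\[
\mu\Big(\bigcap_{j=1}^q T^{-k_j}A\Big)=\int_E (\wh T^{k_2}1_A)\cdot 1_A\cdot \prod_{j=3}^q (1_A\circ T^{k_j-k_2})\,d\mu,
\]
where the retained factor $1_A$ (from the $j=2$ term $1_A\circ T^{0}$) forces the integrand to be supported in $A$. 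Bounding $\wh T^{k_2}1_A$ on $A$ by a constant multiple of $(k_2-k_1)_1^b$ (see below) and pulling it out leaves exactly $\mu(A\cap\bigcap_{j=3}^q T^{-(k_j-k_2)}A)$, an intersection of the same type over $q-1$ sets whose consecutive gaps are $k_3-k_2,\ldots,k_q-k_{q-1}$, i.e.\ the original gaps with $k_2-k_1$ deleted. The induction hypothesis then completes the step, the base case being $\mu(A)<\infty$.

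The key pointwise estimate I need is that $\wh T^m 1_A(x)\le c\,(m)_1^b$ for a.e.\ $x\in A$, uniformly in $m\ge 0$. For large $m$ this follows from \eqref{eq:uniform ret} applied to $g=1_A$ (which is bounded and continuous on $A$): there is $N_0$ so that $\wh T^m 1_A(x)\le (\mu(A)+1)b_m^{-1}$ a.e.\ on $A$ for all $m\ge N_0$, and since $(b_m^{-1})\in\RV_\infty(\beta-1)$ with $b>\beta-1$, Potter's bounds convert this into $\wh T^m 1_A\le c\,m^b=c\,(m)_1^b$. For the finitely many small values $0\le m<N_0$ (the value $m=0$ arising precisely when $k_j=k_{j-1}$), I would use that the transfer operator is positive and satisfies $\wh T^m 1_A\le \wh T^m 1_E=1$ a.e., where $\wh T 1_E=1$ follows from \eqref{eq:dual} (extended to non-negative functions by monotone convergence) together with the measure-preservation of $T$. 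Since $b<0$, the factor $(m)_1^b$ stays bounded below by $N_0^b$ on $\{0\le m<N_0\}$, so the crude bound $\wh T^m 1_A\le 1$ is absorbed into the constant.

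The main obstacle is exactly this small-gap regime: the uniform return \eqref{eq:uniform ret} only describes the asymptotics of $\wh T^m 1_A$ as $m\to\infty$ and gives no control for small $m$, so one must separately guarantee that $\wh T^m 1_A$ is uniformly essentially bounded on $A$ there. The positivity of $\wh T$ together with $\wh T 1_E=1_E$ (equivalently, the $L^\infty$-contractivity of the transfer operator) resolves this cleanly, and, combined with the boundedness below of $(m)_1^b$ on bounded index sets, lets all small gaps be absorbed into a single constant $c$ (which may depend on $q$ but not on the $k_j$). Everything else is routine bookkeeping of the iterated dual-operator identity and of the product of Potter factors $\prod_{j=2}^q c\,(k_j-k_{j-1})_1^b$, together with the finite prefactor $\mu(A)$ from the base case.
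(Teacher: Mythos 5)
Your proof is correct and follows essentially the same route as the paper's: induction on $q$, using measure-preservation to shift indices, the dual property \eqref{eq:dual} to extract $\wh{T}^{k_2-k_1}1_A$, and the uniform return \eqref{eq:uniform ret} combined with Potter's bound to control that factor by $c(k_2-k_1)_1^b$. Your explicit treatment of the small-gap regime via positivity of $\wh{T}$ and $\wh{T}^m 1_A\le \wh{T}^m 1_E=1$ is a welcome point of care that the paper leaves implicit, but it does not change the argument's structure.
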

\begin{proof}
We shall prove the conclusion by induction.
For $q=2$, using the measure-preserving property of $T$, we have
\[
\mu\left(  T^{-k_1}A \cap T^{-k_2} A \right)=\mu\left(  A \cap T^{-(k_2-k_1)} A \right).
\]
Recall that by Potter's bound for regular variation \cite[Theorem 1.5.6(i)]{bingham:goldie:teugels:1989:regular}, if a sequence $(a_n)\in \RV_\infty(-\gamma)$, $\gamma>0$, then for any $\gamma^*\in(0,\gamma)$, there exists some constant $c>0$ such that $a_n\le c n^{-\gamma^*}$.
Hence the conclusion follows from \eqref{eq:unif ret to mix} and Potter's bound. Now suppose that the conclusion holds for $q\ge 2$, and  we shall prove that it also holds for $q+1$. Indeed, using the measure preserving property of $T$ and the dual operator property \eqref{eq:dual},
\begin{align*}
\mu \left( \bigcap_{j=1}^{q+1} T^{-k_j} A \right)&=\mu \left( \bigcap_{j=1}^{q+1} T^{-(k_j-k_1)} A \right)=\int 1_A \times   \left(1_A \times  1_A \circ T^{k_3-k_2}  \times  \ldots  \times   1_A \circ T^{k_{q+1}-k_2} \right) \circ T^{k_2-k_1} d\mu\\
&=\int_A (\wh{T}^{k_2-k_1} 1_A )\times   \left(1_A \times  1_A \circ T^{k_3-k_2}  \times  \ldots  \times   1_A \circ T^{k_{q+1}-k_2} \right)   d\mu\\
&\le c  \mu(A) (k_2-k_1)^{b}_1 \mu \left( \bigcap_{j=2}^{q+1} T^{-k_j} A \right),
\end{align*}
for some constant $c>0$, where for the   inequality we have used  \eqref{eq:uniform ret} (note that $A$ is the whole subspace and thus $\mu$-a.e.\ continuous) and Potter's bound. Then the conclusion follows from the induction hypothesis.
\end{proof}
 
Now we are ready to carry out the method of moments computation.
\begin{Pro}\label{Pro:moment CLT}
Let $S_n(t)$ be as in \eqref{eq:S_n(t) CLT}  where $\rho_0$ defining $(X_{k,1}^{(n)})_{1\le k\le n}$ satisfies the assumptions in Lemma \ref{Lem:reduction}.
Assume $\beta<1-1/p$. Then
 as $n\rightarrow\infty$,
\begin{equation}\label{eq:moment conv CLT}
\E [S_n(t_1)\ldots S_n(t_r)]\rightarrow \sigma^r \E[ B(t_1)\ldots B(t_r) ]=\sigma^r \sum_{\cl{P}(r)} \prod_{j=1}^{r/2}  (t_{u_j}\wedge t_{v_j}),
\end{equation}
where
\[
\sigma^2:=\sum_{k=-\infty}^\infty \E[X_kX_0]=p!  \sum_{k=-\infty}^{\infty}  \mu^{ p}((f\circ T_p^k) \cdot f),
\]
  $B(t)$ is the standard Brownian motion,    and $\cl{P}(r)$ denotes the collection of all the partitions of $\{1,\ldots,r\}$  into disjoint pairs $\{u_j,v_j\}$, $j=1,\ldots,r/2$ if $r$ is even, and  is understood as $\emptyset$ (hence the last expression in \eqref{eq:moment conv CLT} is zero) if $r$ is odd.
\end{Pro}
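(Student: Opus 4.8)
The plan is to establish \eqref{eq:moment conv CLT} by the method of moments applied to the compound-Poisson representation \eqref{eq:X_{k,1}}. Expanding the product and using that $(Z_i)$ is independent of $(U_{i,n})$ and of $N_n$, I would write
\[
\E[S_n(t_1)\cdots S_n(t_r)]=(p!)^r n^{-r/2}\,\E\!\sum_{I_1,\ldots,I_r\in\cl{D}_p(N_n)} \E\Big[\prod_{i}Z_i^{m_i}\Big]\,\E\Big[\prod_{\ell=1}^r L_{n,I_\ell,t_\ell}\Big],
\]
where $m_i$ is the number of index sets $I_\ell$ containing $i$, the inner expectations are over the marks $(Z_i)$ and the locations $(U_{i,n})$ respectively, and the outer one is over $N_n$. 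Since each $Z_i$ is symmetric, $\E\prod_i Z_i^{m_i}=\prod_i\E[Z_i^{m_i}]$ vanishes unless every $m_i$ is even; in particular it vanishes identically when $rp$ is odd, as then $\sum_i m_i=rp$ is odd. I would then classify the surviving tuples $(I_1,\ldots,I_r)$ by their \emph{overlap diagram}: the partition of the $rp$ argument-positions into blocks, one per shared physical point, each of even size $m_i\ge 2$ and (since the off-diagonal integral forbids repeated arguments within a single slot) meeting each slot at most once.

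The leading diagrams are the perfect pairings: $r$ is even and the $r$ slots split into $r/2$ pairs $\{u_j,v_j\}$ with $I_{u_j}=I_{v_j}=:J_j$ for disjoint $J_1,\ldots,J_{r/2}\in\cl{D}_p(N_n)$, so every $m_i=2$. For a fixed pairing the mark factor is $Q^{-rp/2}$ (each of the $rp/2$ points contributing $\E Z^2=\int z^2\rho_0(dz)/Q=1/Q$ by standardization), the location factor is $\prod_j\E[L_{n,J_j,t_{u_j}}L_{n,J_j,t_{v_j}}]\sim\prod_j nw_n^{-p}(t_{u_j}\wedge t_{v_j})\sigma^2/p!$ by Lemma \ref{Lem:SRD sum var}, and the number of ordered disjoint choices of $(J_1,\ldots,J_{r/2})$ has mean asymptotic to $(Qw_n)^{rp/2}/(p!)^{r/2}$ by the factorial moments of $N_n$. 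Multiplying these with the prefactor $(p!)^r n^{-r/2}$, all powers of $w_n$ and $n$ cancel and the $p!$ constants collapse to give exactly $\sigma^r\prod_j(t_{u_j}\wedge t_{v_j})$ per pairing; summing over $\cl{P}(r)$ reproduces the Wick expansion $\sigma^r\sum_{\cl{P}(r)}\prod_j(t_{u_j}\wedge t_{v_j})=\sigma^r\E[B(t_1)\cdots B(t_r)]$, with $\sigma^2=p!\sum_\kappa\mu^p((f\circ T_p^\kappa)\cdot f)$ via \eqref{eq:L^2 iso} and \eqref{eq:b_n w_n}.

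It then remains to show that every other admissible diagram is $o(1)$. Using $|f|\le\|f\|_\infty 1_{A^p}$, the location expectation of a diagram with $v$ distinct points is bounded by $w_n^{-v}\sum_{k_1,\ldots,k_r}\prod_i\mu\big(\bigcap_{\ell\in S_i}T^{-k_\ell}A\big)$, where $S_i$ is the set of slots meeting block $i$; the $w_n^{-v}$ cancels against the $w_n^{v}$ from the count of the $v$ points, so only the growth of the $k$-sum in $n$ matters. Bounding each factor $\mu(\bigcap_{\ell\in S_i}T^{-k_\ell}A)$ by Lemma \ref{Lem:potter} with exponent $b\in(\beta-1,0)$ and summing over $k_1,\ldots,k_r\le\floor{nt}$, I would organize the estimate by the connected components of the multigraph on the slots (vertices = slots of degree $p$, edges = shared points). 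A single double-pair component contributes rate $n$ per two slots, while any connected component spanning $c\ge 3$ slots contributes a strictly smaller rate than $n^{c/2}$; choosing $b$ close enough to $\beta-1$ and invoking $p(\beta-1)<-1$ turns this into a saving $O(n^{r/2-\delta})$, $\delta>0$, which the normalization $n^{-r/2}$ kills. This also settles odd $r$: a $p$-regular multigraph on an odd number of vertices cannot be a disjoint union of double-pairs, so every such diagram carries a large component and the limit is $0$.

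The main obstacle is this uniform estimate: proving that every connected cluster of $c\ge 3$ slots grows strictly slower than $n^{c/2}$, which is precisely where $\beta<1-1/p$ enters. I would prove it by an inductive reduction of the multiple sum, peeling off one slot at a time through Lemma \ref{Lem:potter} and the dual identity \eqref{eq:dual}, while controlling the accumulated exponent by a convolution/Potter argument; the prototypical case is the $c$-cycle sum $\sum_{k_1,\ldots,k_c}\prod_i(|k_i-k_{i+1}|)_1^{b}$, which for $p=2$ is of order $n^{c\beta}$ and hence, after normalization, of order $n^{c(\beta-1/2)}\to 0$ exactly when $\beta<1/2=1-1/p$. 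By contrast, the constant bookkeeping in the leading term, though delicate, is routine.
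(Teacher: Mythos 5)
Your architecture coincides with the paper's: expand the compound-Poisson representation \eqref{eq:X_{k,1}}, use the symmetry of the $Z_i$ to kill all diagrams with an odd-multiplicity point (hence everything when $pr$ is odd), identify the perfect pairings (the set $\cl{C}(m)$ in \eqref{eq:C(m)}) as the leading diagrams via Lemma \ref{Lem:SRD sum var} and Poisson counting, and control the remainder through Lemma \ref{Lem:potter}. Your leading-term arithmetic (mark factor $Q^{-pr/2}$, covariance factor from Lemma \ref{Lem:SRD sum var}, count $(Qw_n)^{pr/2}/(p!)^{r/2}$) is correct and matches the paper's. One genuine streamlining on your side: by taking the unconditional expectation and using the exact Poisson factorial moments $\E[N_n(N_n-1)\cdots(N_n-m+1)]=(Qw_n)^m$, you make the diagram sums deterministic and bypass the paper's two-step route of a.s.\ convergence of the conditional moments $\E_N[\cdot]$ followed by a separate uniform-integrability argument (Part 2 of the paper's proof); this is legitimate, since the location and mark expectations depend on a diagram only through its shape.

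The genuine gap is exactly the step you flag as ``the main obstacle'' and then defer: the uniform bound $O(n^{r/2-\delta})$ over \emph{all} non-pairing diagrams, and your sketch as stated would not carry through for general $p$. Two concrete points. First, your prescription ``choose $b$ close enough to $\beta-1$'' is not the right choice: the paper needs $b$ in the window \eqref{eq:b range}, namely $(\beta-1)\vee(-1/(p-1))<b<-1/p$. The upper constraint $b<-1/p$ is what makes gaps of full multiplicity $d_u=p$ summable (this is precisely where $\beta<1-1/p$ enters), while the lower constraint $b>-1/(p-1)$ guarantees $1+d_ub>0$ for every partial multiplicity $d_u<p$, without which the iterative gap-summation bound $c\,n^{\sum_u(1+d_ub)1_{\{d_u<p\}}+1}$ and the exponent identity \eqref{eq:key rel}, i.e.\ $m=pL-\sum_u d_u1_{\{d_u<p\}}$, break down as stated; for $p\ge 3$ with $\beta<1-1/(p-1)$, taking $b$ near $\beta-1$ violates this. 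Second, before any ``peeling'' can start, one needs the monotonicity replacement $(k_{u(i,s)}-k_{u(i,s-1)})_1^b\le(k_{u(i,s)}-k_{u(i,s)-1})_1^b$, which converts the \emph{interleaved} within-block gaps of a general diagram into consecutive gaps carrying multiplicities $d_u$; your $c$-cycle prototype (all blocks of size $2$, $p=2$) never meets this interleaving, nor the blocks of multiplicity $\ge 4$ (e.g.\ $I_1=\cdots=I_4$), which in the paper form a separate case where all $d_u\in\{0,p\}$, no exponent gain is available, and negligibility instead comes from the counting bound $m\le pr/2-p$ on the number of distinct points. Your component-wise claim (any connected component of $c\ge 3$ slots contributes $o(n^{c/2})$) is in fact true under $\beta<1-1/p$ and is equivalent to what the paper proves, but establishing it uniformly over all admissible hypergraph diagrams is the actual content of the proof, and it is missing from the proposal.
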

\begin{proof} 
The   equality in \eqref{eq:moment conv CLT} follows from \cite[Theorem 1.28]{janson:1997:gaussian} and the covariance structure of a Brownian motion. So it is left to prove the   convergence in  \eqref{eq:moment conv CLT}. 

We use the following notation throughout to denote conditional expectation given the Poisson process $N$ in   \eqref{eq:X_{k,1}}: 
\[\E_N[\ \cdot \ ]=\E[ \ \cdot \ | N].\]
If $r=1$,   we have $\E [S_n(t_1)]=\E (\E_N[S_n(t_1)])=0$  due to the symmetry of $Z_i$, and hence \eqref{eq:moment conv CLT} holds. We  assume $r\ge 2$ throughout below. 

\medskip
\noindent \emph{Part 1}:  The first part of the proof aims at showing as $n\rightarrow\infty$,
\begin{equation}\label{eq:first part CLT}
\E_N [S_n(t_1)\ldots S_n(t_r)]\rightarrow \sigma^r \sum_{\cl{P}(r)} \prod_{j=1}^{r/2}  (t_{u_j}\wedge t_{v_j}) \quad\text{a.s..}
\end{equation}
   By independence, we have a.s.\
\begin{equation}\label{eq:moment S_n CLT}
\E_N [S_n(t_1)\ldots S_n(t_r)]=(p!)^r n^{-r/2}\sum_{I_1,\ldots,I_r\in\cl{D}_p(N_n)} \E \left[ \left(\prod_{i\in I_1} Z_i\right)\ldots  \left(\prod_{i\in I_r} Z_i\right)\right]\E\left(  \prod_{\ell =1}^r L_{n,I_\ell,t_{\ell}}\right).
\end{equation}
Due to the symmetry of the distribution of $(Z_i)$,  a factor $\E\left[ (\prod_{i\in I_1} Z_i)\ldots\left(\prod_{i\in I_r} Z_i\right)\right]\neq 0$  if and   only if the cardinality $|\cl{I}(i)|$ (recall $\cl{I}(i)$ defined in \eqref{eq:cl I}) is even for each $i=1,\ldots, N_n$, which can happen only if $pr$ is even.   If  $pr$ is odd, then so is $r$, and hence the limit in \eqref{eq:first part CLT} is zero. So \eqref{eq:first part CLT} trivially holds when $pr$ is odd.  We shall assume   $pr$ is even  below  throughout the proof of Part 1.   

We shall analyze  \eqref{eq:moment S_n CLT} by decomposing it into contributing and negligible terms. For this purpose, we introduce 
  for $m\ge p$,
\begin{equation}\label{eq:M(m)}
\cl{M}(m)=\{(I_1,\ldots,I_r):\  I_\ell \in \cl{D}_p(m),\ \ell=1,\ldots,r,\   |\cl{I}(i)| \text{ is even for }i=1,\ldots, m
  \},
\end{equation}
  where  $\cl{D}_p(m)$ is as in \eqref{eq:D_p(n)}.
When  $r$ is an even integer, we define 
\begin{equation}\label{eq:C(m)}
\cl{C}(m)=\{(I_1,\ldots,I_r)\in \cl{M}(m):\  \text{exactly $r/2$ pairs of $I_\ell$'s coincide   and different pairs are disjoint.}
  \}.
\end{equation}
\begin{figure} 
\centering
\begin{tabular}{|c|c|c|c|c|}
\hline 
$i$ & 1 & 2 & 3 & 4 \\ 
\hline 
$I_1$ & • &  &   &   • \\ 
\hline 
$I_2$ &  & • &  • &   \\ 
\hline 
$I_3$ & • &  &    & • \\ 
\hline 
$I_4$ &  & • &•   &   \\ 
\hline 
\end{tabular} 
\caption{\emph{The configuration above corresponds to $p=2$, $r=4$,   $I_1= I_3=\{1,4\}$, $I_2=I_4=\{2,3\}$. In this case, $\cl{I}(1)=\cl{I}(4)=\{1,3\}$, $\cl{I}(2)=\cl{I}(3)=\{2,4\}$.  So $(I_1,I_2,I_3,I_4)\in  \cl{C}(4)\cap \cl{N}(4)$.} }\label{ill:1}. 
\end{figure}
When $r$ is odd, set $\cl{C}(m)=\emptyset$.   Define for $m=p,p+1,\ldots,pr/2$ that
\begin{equation}\label{eq:N(m)}
\cl{N}(m)=\{(I_1,\ldots,I_r)\in \cl{M}(m):\     \cl{I}(i)\neq \emptyset,\ i=1,\ldots,m\}.
\end{equation}
 Note that we have suppressed in notation the dependence of $\cl{M}(m)$, $\cl{C}(m)$ and $\cl{N}(m)$ on $p$ and $r$. See Figures \ref{ill:1} and \ref{ill:2}  for  illustrations of the notation introduced above.

Since $N_n\uparrow \infty$ as $n\rightarrow\infty$  
a.s., we can assume without loss of generality that  $N_n \ge p$.  By the arguments below \eqref{eq:moment S_n CLT}, the index set $\cl{D}_p(N_n)$  under the summation sign in \eqref{eq:moment S_n CLT} can be replaced by $\cl{M}(\cl{N}_n)$. Decompose the sum in \eqref{eq:moment S_n CLT} into
\begin{equation}\label{eq:A}
A(n)=  (p!)^r n^{-r/2}  \sum_{(I_1,\ldots,I_r)\in\cl{C}(N_n)} \E  \left[ \left(\prod_{i\in I_1} Z_i\right)\ldots  \left(\prod_{i\in I_r} Z_i\right)\right]\E  \left(  \prod_{\ell =1}^r L_{n,I_\ell,t_{\ell}}\right)
\end{equation}
and
\begin{equation}\label{eq:B}
B(n)=  (p!)^r n^{-r/2} \sum_{(I_1,\ldots,I_r)\in\cl{M}(N_n)\setminus\cl{C}(N_n)} \E \left[ \left(\prod_{i\in I_1} Z_i\right)\ldots  \left(\prod_{i\in I_r} Z_i\right)\right]\E \left(  \prod_{\ell =1}^r L_{n,I_\ell,t_{\ell}}\right).
\end{equation}
Note that both $A(n)$ and $B(n)$ are stochastic since they depend on the Poisson count $N_n$. We shall show that $A(n)$ is the contributing term while $B(n)$ is negligible.

We     assume that $r$ is even so that $A(n)$ is possibly nonzero.
 To enumerate the elements in
  $\cl{C}(N_n)$,   first   select a partition from $\cl{P}(r)$ which specifies the pairings among $I_1,\ldots,I_r$. Then assign  ${N_n    \choose p}$ elements from $\{1,\ldots, N_n\}$ to the 1st pair, assign  ${  N_n  -p   \choose p}$ from the rest to the 2nd pair, $\ldots$, and assign  ${ N_n-pr/2 \choose p}$ from the rest to the last pair.   
Therefore, using  the fact that $Z_i$ follows the distribution $\rho_0(\cdot)/Q$ and the relation \eqref{eq:rho_0 moments}, we have
\begin{align*}
A(n)  = (p!)^r n^{-r/2}  \sum_{\cl{P}(r)}  {  N_n   \choose p}  {  N_n  -p   \choose p}\ldots  { N_n-pr/2 \choose p}     Q^{-pr/2}  \prod_{j=1}^{r/2} \E[ L_{n,I,t_{u_j}} L_{n,I,t_{v_j}}],
\end{align*}
where the sum is over partitions   $\{\{u_j,v_j\},\ j=1,\ldots,r/2\}\in\cl{P}(r)$ . 
Applying Lemma \ref{Lem:SRD sum var} and the fact $N_n\sim Qw_n$ a.s.,  we have a.s. 
\begin{align*} 
A(n)&\sim    n^{-r/2}     \sum_{\cl{P}(r)}   (Qw_n)^{pr/2}  Q^{-pr/2}   \prod_{j=1}^{r/2} \left(\left(\sum_{k=-\infty}^{\infty}  \mu^p((f\circ T_p^k) f)\right) (t_{u_j} \wedge t_{v_j}) n w_n^{-p}\right),
\end{align*}
 which  simplifies to  the right-hand side of \eqref{eq:first part CLT}.  
 
Next we show that $B(n)\rightarrow 0$ a.s.\ as $n\rightarrow\infty$.   First
  by H\"older's inequality, 
\begin{equation}\label{eq:holder}
\left|\E\left[ \left(\prod_{i\in I_1} Z_i\right)\ldots \left(\prod_{i\in I_r} Z_i\right)\right]\right|\le \E | Z_{1}|^{pr}<\infty .
\end{equation} Second,   the joint law of  $(L_{n,I_\ell,  t_{\ell}}, \ell=1,\ldots,r)$ is unchanged if the elements in $I_\ell$, $\ell=1,\ldots,r$, are replaced by the elements of $\{1,\ldots,m\}$, $m=|\cup_{\ell=1}^r I_\ell|$, based on any one-to-one correspondence.
Exploring these facts we have
\begin{equation}\label{eq:B(n) bound}
|B(n)|\le c n^{-r/2} \sum_{m=p}^{pr/2}  {N_n \choose m}    \sum_{(I_1,\ldots,I_r)\in \cl{N}(m)\setminus\cl{C}(m)}   \E\left(  \prod_{\ell =1}^r | L_{n,I_\ell,t_{\ell}}|\right).
\end{equation}

Fix for now $m$ and
  $(I_1,\ldots,I_r)\in \cl{N}(m)\setminus\cl{C}(m)$. Our next goal is to provide a bound for   $\E\left(  \prod_{\ell =1}^r | L_{n,I_\ell,t_{\ell}}|\right)$. 
It follows from a triangular inequality, the restriction $t_\ell\in [0,1]$  and the assumptions on $f$ that
\begin{equation}\label{eq:L simple bound}
|L_{n,{I_\ell},t_\ell}|\le c \sum_{k=1}^{n} (1_{A^p}\circ T_p^k) (U_{I_\ell,n}).
\end{equation}
Hence 
 \begin{align}\label{eq:bound prod L}
 \E\left(  \prod_{\ell =1}^r | L_{n,I_\ell,t_{\ell}}|\right)\le c \sum_{k_1,\ldots,k_r=1}^n
     \prod_{i=1}^m \int  \left(\prod_{\ell \in \cl{I}(i)} 1_A\circ T^{k_l}\right)   d\mu_n 
=  c \sum_{k_1,\ldots,k_r=1}^n \prod_{i=1}^m f_{|\cl{I}(i)|,n}(k_{\cl{I}(i)}),
\end{align}
where $\cl{I}(i)$ is as in \eqref{eq:cl I} and
\[ 
f_{q,n}: \{1,\ldots,n\}^q \rightarrow [0,1] ,\quad f_{q,n}(k_1,\ldots,k_q)=\begin{cases}
1 & \text{ if }q=0;\\
\mu \left( \bigcap_{\ell =1}^q T^{-k_\ell} A \right)/w_n& \text{ if }q\ge 1. \end{cases} 
\]
Note that since $f_{q,n}$ is symmetric, the order of the variables in $k_{\cl{I}(i)}$  does not matter.
By Lemma \ref{Lem:potter} and the measure-preserving property of $T$, we have
\[
f_{q,n}(k_1,\ldots,k_q)\le c \begin{cases}
1 & \text{ if }q=0;\\
w_n^{-1} & \text{ if }q=1;\\
w_n^{-1} (k_2-k_1)_1^{b} \ldots (k_q-k_{q-1})^b_1 & \text{ if }q\ge 2, 
\end{cases}
\]
where $b$ is chosen to satisfy  (recall $\beta-1<-1/p$)
 \begin{equation}\label{eq:b range}
(\beta-1)\vee\left(- \frac{1}{p-1} \right) <b<-\frac{1}{p}.
\end{equation}  
Next, we shall provide a bound for
\begin{align*}
 \sum_{1\le k_1\le \ldots \le k_r\le n} \prod_{i=1}^m f_{|\cl{I}(i)|,n}(k_{\cl{I}(i)}), 
\end{align*}
which  in turn yields a bound for the full sum in \eqref{eq:bound prod L} by adding up all $r!$ orders of $k_1,\ldots,k_r$.
Suppose $\cl{I}(i)=\{u(i,1),u(i,2)\ldots,u(i,|\cl{I}(i)|)\}\subset\{1,\ldots,r\}$, $u(i,1)<\ldots<u(i,|\cl{I}(i)|)$, $i=1,\ldots,m$. When $1\le k_1\le \ldots \le k_r\le n$, one has
\[
\prod_{i=1}^m f_{|\cl{I}(i)|,n}(k_{\cl{I}(i)})=w_n^{-m}\prod_{i=1}^m  \prod_{s=2}^{|\cl{I}(i)| }(k_{u(i,s)}-k_{u(i,s-1)})_1^b \le w_n^{-m}\prod_{i=1}^m  \prod_{s=2}^{|\cl{I}(i)|} (k_{u(i,s)}-k_{u(i,s)-1})_1^b,
\]
where the product $\prod_{s=2}^{|\cl{I}(i)| }$ is understood as $1$ if $|\cl{I}(i)|<2$, and   for the replacement of $u(i,s-1)$ by  $u(i,s)-1$ in the   inequality, we have used the monotonicity of the function $x\mapsto (x)_1^{b}$. Next, by rearranging the product, we have
\[
\prod_{i=1}^m  \prod_{s=2}^{|\cl{I}(i)|} (k_{u(i,s)}-k_{u(i,s)-1})_1^b = \prod_{u=2}^r  (k_{u}-k_{u-1})^{ d_u b}_1,
\]
where  
\[d_u=|\{1\le i\le m:\  u \in \cl{I}(i),\ \exists u'\in \cl{I}(i) \text{ s.t. } u'<u\}|,\quad u=2,\ldots,r.
\]
\begin{figure} 
\centering
\begin{tabular}{|c|c|c|c|}
\hline 
$i$ & 1 & 2 & 3 \\ 
\hline 
$I_1$ & • & • &   \\ 
\hline 
$I_2$   & • &   & • \\ 
\hline 
$I_3$ & •  & •  &  \\ 
\hline 
$I_4$ & • &  &   • \\ 
\hline 
\end{tabular} 
\caption{\emph{The configuration above corresponds to $p=2$, $r=4$,  $m=3$,  $I_1= I_3=\{1,2\}$, $I_2=I_4=\{1,3\}$. So  $(I_1,I_2,I_3,I_4)\in  \cl{N}(3)\setminus\cl{C}(3)$ and   $\cl{I}(1)=\{1,2,3,4\}$, $\cl{I}(2)=\{1,3\}$, $\cl{I}(3)=\{2,4\}$.   When $1\le k_1\le k_2\le k_3 \le k_4\le n$, we have $w_n^3\prod_{i=1}^3 f_{|\cl{I}(i)|,n}(k_{\cl{I}(i)})=(k_2-k_1)^b_1(k_3-k_2)^b_1 (k_4-k_3)^b_1  \times (k_3-k_1)^b_1 \times (k_4-k_2)^b_1\le    (k_2-k_1)^{b}_1 (k_3-k_2)^{2b}_1 (k_4-k_3)^{2b}_1.$}}\label{ill:2}. 
\end{figure}
See Figure \ref{ill:2} for an illustration of the bound of $\prod_{i=1}^m f_{|\cl{I}(i)|,n}(k_{\cl{I}(i)})$ above.
Let
$
L =|\{2\le u \le r:\  d_u<p \}|+1.$
Then 
\begin{align}\label{eq:key rel}
m=|I_1| + |I_2\setminus I_1|+\ldots+ |I_r\setminus(\cup_{j=1}^{r-1} I_j) |= p+(p-d_2)+\ldots+(p-d_r)=pL-\sum_{u=2}^r d_u 1_{\{d_u<p\}}.
\end{align} 
By \eqref{eq:b range}, we have  $d_u b<-1$ if $d_u=p$ and $d_ub>-1$ if $d_u<p$. So summing  iteratively in the order $k_r,k_{r-1},\ldots,k_1$, we obtain
\begin{align}\label{eq:bound off diag}
 \sum_{1\le k_1\le \ldots \le k_r\le n} \prod_{u=2}^r  (k_{u}-k_{u-1})^{ d_u b}_1 &\le c  n^{ \sum_{u=2}^r (1+ d_ub)1_{\{d_u<p\}}+1}= c n^{(pb+1)L-bm},
\end{align}
where for the last equality we have used the relation \eqref{eq:key rel} and the definition of $L$.
There are two cases: (i) $\sum_{u=2}^p d_u 1_{\{d_u<p\}}>0$ and (ii) $\sum_{u=2}^p d_u 1_{\{d_u<p\}}=0$. In case (i),  from \eqref{eq:key rel} we deduce $L\ge (m+1)/p$. Hence the exponent in the bound \eqref{eq:bound off diag} satisfies
\[
(pb+1)L-bm\le(pb+1)(m+1)/p-bm = \frac{m}{p} +  b+1/p < \frac{m}{p}\le \frac{r}{2},
\]
where the last inequality holds since $m\le pr/2$.
In case (ii), since each $d_u=0$ or $p$, the index sets $I_\ell$'s  either coincide or  disjoint.     Because also  $(I_1,\ldots,I_r)\in \cl{N}(m)\setminus \cl{C}(m)$, it is not difficult to see that $m\le pr/2-p$. Note also $L\ge m/p$ in view of \eqref{eq:key rel}.  Hence 
\[
(pb+1)L-bm\le (pb+1)m/p-bm =   \frac{m}{p}\le \frac{r}{2}-1 <\frac{r}{2}.
\]
Combining these two cases and returning to \eqref{eq:bound prod L}, we conclude  that for some constant $\delta>0$ which does not depend on $n$ or $m$,   
\begin{equation}\label{eq:B neg}
\E\left(  \prod_{\ell =1}^r |L_{n,I_\ell,t_{\ell}}|\right)\le  c  n^{r/2-\delta} w_n^{-m}
\end{equation}
for any   $(I_1,\ldots,I_r)\in \cl{N}(m)\setminus\cl{C}(m)$.
Plugging \eqref{eq:B neg} into \eqref{eq:B(n) bound} and using the fact  that  $N_n\sim Q w_n$  a.s.\ as $n\rightarrow\infty$,    we have $B(n)\rightarrow 0$ a.s..

 \medskip
 
\noindent\emph{Part 2}. The second part of the proofs aims at establishing the uniform integrability of\[\{\E_N [S_n(t_1)\ldots S_n(t_r)],\ n\ge 1\},\] which combined   with \eqref{eq:first part CLT} concludes the proof.
For this it suffices to show the uniform boundedness in $n$ of
\[
\E \left[ \E_N [S_n(t_1)\ldots S_n(t_r)]^2\right]\le \E  \left[\E_N[ S_n(t_1)^2\ldots S_n(t_r)^2]\right].
\]
Similarly as the way \eqref{eq:B(n) bound} is obtained, we have
\begin{align*}
\E_N[ S_n(t_1)^2\ldots S_n(t_r)^2]\le c  \sum_{m=p}^{pr}  {N_n \choose m}    \frac{1}{n^{r}} \sum_{(I_1,\ldots,I_{2r})\in \cl{N}(m)}    \E \left(  \prod_{\ell =1}^{2r} |L_{n,I_\ell,t_{\ell}}|\right),
\end{align*}
where $\cl{N}(m)$ is as in \eqref{eq:N(m)} but with $r$ replaced by $2r$.
We claim for each $(I_1,\ldots,I_{2r})\in \cl{N}(m)$ that
\begin{equation}\label{eq:bound L twice moment}
    \E \left(  \prod_{\ell =1}^{2r} |L_{n,I_\ell,t_{\ell}}|\right)\le c w_n^{-m} n^{r}.
\end{equation}
  Indeed, 
starting with the bound \eqref{eq:L simple bound},  this follows from  arguments similar to Part 1: introducing $\cl{C}(m)$ as \eqref{eq:C(m)} but with $r$ replaced by $2r$, dividing   $\cl{N}(m)$ into   $\cl{N}(m)\cap \cl{C}(m)$ and $\cl{N}(m)\setminus  \cl {C}(m)$ (note $\cl{C}(m)$ is nonempty only when  $m=pr$),     and then applying Lemma \ref{Lem:SRD sum var} and \eqref{eq:B neg} respectively in these two cases.
So
\[
\E_N[ S_n(t_1)^2\ldots S_n(t_r)^2]\le   c \sum_{m=p}^{pr} \frac{ N_n(N_n-1)\ldots (N_n-m+1)}{w_n^{m}}.
\]
Applying the fact $\E [N_n(N_n-1)\ldots (N_n-m+1)]=Q^m w_n^m$  concludes the desired boundedness.

\end{proof}

 
\begin{proof}[Proof of Theorem \ref{Thm:CLT}]
Recall for a random vector $(\xi_1,\ldots,\xi_d)$, $d\in \bb{Z}_+$, the set of its (multivariate) moments is  $\{E[\xi_1^{m_1}\ldots \xi_d^{m_d}]:\ m_1,\ldots,m_d \in \{0,1,2,\ldots\} \}$ given that all the expectations exist.  The distribution of $(\xi_1,\ldots,\xi_d)$ is said to be moment-determinate  if it is the only distribution with such a set of moments. It is known (see, e.g., \cite[Theorem 14.6]{schmudgen:2017:moment}) that  the moment determinancy of every marginal univariate distribution of $\xi_i$, $i=1,\ldots,d$, implies the moment determinancy of the joint distribution of  $(\xi_1,\ldots,\xi_d)$. In addition, it is well-known that if a limit  distribution (univariate or multivariate) is moment-determinate, then the convergence of all the moments implies the weak convergence to the limit distribution.
Suppose first that $\rho=\rho_0$ as in Lemma \ref{Lem:reduction}. Focusing on the joint distributions  at a fixed finite set of time points,  the convergence of all the  moments  follows  from Proposition \ref{Pro:moment CLT} (note that the time points $t_1,\ldots,t_r$ in Proposition \ref{Pro:moment CLT} are allowed to coincide.).  It is well-known that a univariate Gaussian distribution is moment-determinate, and hence so is a multivariate Gaussian distribution. So   the convergence of the finite-dimensional distributions holds in this case.   The extension from $\rho_0$ to general $\rho$ follows from Lemma \ref{Lem:reduction}. 

We are left to show the tightness in $D[0,1]$ with the uniform metric under the additional condition $\int_{\bb{R}}  x^4  \rho(dx)<\infty$, which by \eqref{eq:levy moment equiv} is equivalent to 
\begin{equation}\label{eq:4 th moment finite}
\int_{0}^\infty \rho^{\leftarrow}(x)^4dx<\infty.
\end{equation}  
Define 
\[
S_n'(t)= \frac{1 }{\sqrt{n}}\sum_{k=1}^{\floor{nt}}X_{k,3}^{(n)} =    p!   \sum_{ I \in \cl{D}_p} \left(\prod_{i\in I}\epsilon_i  \rho^{\leftarrow}(\Gamma_i/w_n)\right) \left( \frac{1}{\sqrt{n}} \sum_{k=1}^{\floor{nt}}  (f\circ T_p^k)(U_{I,n})\right),\quad t\in [0,1],
\]
which, in view of \eqref{eq:X_k,3=X_k}, has the same finite-dimensional distributions with the process defined as above but with $(X_{k,3}^{(n)})_{1\le k\le n}$ replaced by $(X_k)_{1\le k\le n}$.
For $0\le t_1< t_2\le 1$   and $I\in \cl{D}_p$,   define the measurable map
\begin{equation*} 
L_{n,t_1,t_2}: A_n^p \rightarrow \bb{R},\quad  L_{n,t_1,t_2}(u_1,\ldots,u_p):= \frac{1}{\sqrt{n}}\sum_{k=1}^{\floor{n t_2}-\floor{n t_1}} (f\circ T_p^k) (u_1,\ldots,u_p).
\end{equation*} 

Using the stationarity of $(X_{k,3}^{(n)})_{1\le k\le n}$ and applying  a generalized Khinchine inequality for multilinear forms in Rademacher random
variables (\cite[Theorem 1.3 (ii)]{samorodnitsky:szulga:1989:asymptotic})   conditioning on $(\Gamma_i)$ and $(U_{i,n})$,   we have
\begin{align}
\E [S_n'(t_2)-S_n'(t_1)]^{4}&=     \E\left[\left( p! \sum_{ I \in \cl{D}_p} \left(\prod_{i\in I}\epsilon_i  \rho^{\leftarrow}(\Gamma_i/w_n)\right) L_{n,t_1,t_2}(U_{I,n})\right)^4\right]\notag\\
&\le c \E\left[\left( \sum_{ I \in \cl{D}_p} \left( \prod_{i\in I}  \rho^{\leftarrow}(\Gamma_i/w_n)^2\right) L_{n,t_1,t_2}(U_{I,n})^2\right)^2\right].\label{eq:hypercontract}
\end{align} 
The sum inside the  square in \eqref{eq:hypercontract}  can be viewed as an off-diagonal multiple integral of the integrand
\[
  f_n(x_1,u_1,\ldots,x_p,u_p)=\left( \prod_{i=1}^p  \rho^{\leftarrow}(x_i/w_n)^2\right) L_{n,t_1,t_2}(u_1,\ldots,u_p)^2
\] on $(\bb{R}_+\times A_n)^p$ with respect to the (marked)  Poisson random measure $\sum_{i=1}^\infty \delta_{\Gamma_i,U_{i,n}}$ with intensity measure $\nu_n:=\lambda\times \mu_n$ (e.g., \cite[Lemma 12.2]{kallenberg:2002:foundations}), where $\lambda$ is the Lebesgue measure and $\mu_n$ is as in \eqref{eq:prob measure spac}.
 So  by     \cite[Lemma 10.1(iii)]{kallenberg:2017:random},  
 \begin{align*}
 \E [S_n'(t_2)-S_n'(t_1)]^{4} \le c \sum_{m=0}^p m!  {p\choose m}^2  \nu_n^{p-m}( \nu_n^m (f_n)^2),
 \end{align*}
 where $\nu_n^m (f_n)$ is understood as integrating  out $m$ of the $p$ variables of the symmetric function $f_n$ ($\nu^0_n f=f$), and  $\nu_n^{p-m}$ integrates out the $p-m$ variables left  in $\nu_n^m (f_n)^2$. Next,    it can be verified that
 \begin{align*}
\nu_n^{p-m}( \nu_n^m (f_n)^2 )=w_n^{p+m} \left(\int_{0}^\infty   \rho^{\leftarrow}(x)^4 dx\right)^{p-m} \E[L_{n,t_1,t_2}(U_{I_1,n})^2 L_{n,t_1,t_2}(U_{I_2,n})^2],
 \end{align*}
 where $I_1,I_2$ are arbitrary elements of  $\cl{D}_p$ satisfying $|I_1\cap I_2|=p-m$ (so   $|I_1\cup I_2|=p+m$).
Similarly as how \eqref{eq:bound L twice moment} is obtained,  using Lemma \ref{Lem:SRD sum var} and a bound as in \eqref{eq:B neg}  with $n$ replaced by $\floor{n t_2}-\floor{n t_1}$,  we have  
\begin{equation*}
\E[L_{n,t_1,t_2}(U_{I_1,n})^2 L_{n,t_1,t_2}(U_{I_2,n})^2]\le c w_n^{-|I_1\cup I_2|} \left(\frac{\floor{nt_2}-\floor{nt_1}}{n}\right)^2. 
\end{equation*} 
Combining these above we obtain
\begin{align*}
\E [S_n'(t_2)-S_n'(t_1)]^{4}&\le c  \left(\frac{\floor{nt_1}-\floor{nt_2}}{n}\right)^2,
\end{align*}
which concludes tightness in $D[0,1]$ in view of \cite[Lemma 4.4.1]{giraitis:koul:surgailis:2009:large}.

\end{proof}

\subsection{Proof of the non-central limit theorem}\label{sec:proof NCLT}

Now assume  $ p(\beta-1)\in (-1,0)$.
Using $(X_{k,1}^{(n)})_{1\le k\le n}$  in \eqref{eq:X_{k,1}}, we define   in this subsection 
\begin{equation}\label{eq:S_n(t) nCLT}
\left( S_n^*(t) \right)_{t\in [0,1]}=\left( \frac{b_n^p }{w_n^{p/2} n}\sum_{k=1}^{\floor{nt}}X_{k,1}^{(n)}\right)_{t\in [0,1]}\EqD\left(   p!w_n^{-p/2}\sum_{I\in\cl{D}_p(N_n)} \left(\prod_{i\in I} Z_i\right) L_{n,I,t}^*\right)_{t\in [0,1]},
\end{equation}
where    $N_n=N(Qw_n)$ and
\[
L_{n,I,t}^*= \frac{b_n^p}{n}\sum_{k=1}^{\floor{nt}} (f\circ T_p^k) (U_{I,n}).
\]
Note that $\left(\frac{b_n^p }{w_n^{p/2} n}\right)\in  \mathrm{RV}_\infty(p(1-\beta)/2-1)$, where $p(1-\beta)/2-1\in (-1,-1/2)$.
\begin{Pro}\label{Pro:moment nCLT}
Let $S_n^*(t)$ be as in \eqref{eq:S_n(t) nCLT}  where $\rho_0$ defining $(X_{k,1}^{(n)})_{1\le k\le n}$ satisfies the assumptions in Lemma \ref{Lem:reduction}.
Assume $\beta\in (1-1/p,1)$. Then as $n\rightarrow\infty$,
\begin{equation}\label{eq:moment conv main}
\E [S_n^*(t_1)\ldots S_n^*(t_r)]\rightarrow \E[ H_{p,\beta}(t_1)\ldots H_{p,\beta}(t_r) ],
\end{equation}
where 
$
H_{p,\beta}(t)$ is a constant multiple  of the  standard Hermite process $Z_{p,\beta}(t)$ in \eqref{eq:Herm proc}.
\end{Pro}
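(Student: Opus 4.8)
The plan is to mirror the method-of-moments argument of Proposition \ref{Pro:moment CLT}: I would condition on the Poisson count $N_n$, establish in a first part the almost sure convergence of the conditional moments $\E_N[S_n^*(t_1)\cdots S_n^*(t_r)]$, and in a second part their uniform integrability; together these yield convergence of the unconditional moments via $\E[S_n^*(t_1)\cdots S_n^*(t_r)]=\E\big(\E_N[S_n^*(t_1)\cdots S_n^*(t_r)]\big)$. The decisive structural difference from the central case is that here the ``connected'' diagrams survive and produce the non-Gaussian limit.

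First I would expand, using independence,
\[
\E_N[S_n^*(t_1)\cdots S_n^*(t_r)]=(p!)^r w_n^{-pr/2}\sum_{I_1,\ldots,I_r\in\cl{D}_p(N_n)} \E\Big[\prod_i Z_i^{|\cl{I}(i)|}\Big]\, \E\Big(\prod_{\ell=1}^r L_{n,I_\ell,t_\ell}^*\Big).
\]
As in Proposition \ref{Pro:moment CLT}, symmetry of $Z_i$ forces $|\cl{I}(i)|$ to be even for every used index $i$, so only configurations in $\cl{M}(N_n)$ survive and $pr$ must be even. Since $L_{n,I_\ell,t_\ell}^*$ already carries the factor $b_n^p/n$, Lemma \ref{Lem:moments LRD} shows that $\E(\prod_\ell L_{n,I_\ell,t_\ell}^*)$ converges to a finite constant for \emph{every} fixed overlap pattern; hence the contribution of configurations with $m:=|\cup_\ell I_\ell|$ used indices scales like $w_n^{\,m-pr/2}$. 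Because each nonempty $\cl{I}(i)$ is even and $\sum_i|\cl{I}(i)|=pr$, one has $2m\le pr$, so the surviving configurations are exactly those with $m=pr/2$ and $|\cl{I}(i)|=2$ for all $i$ — i.e.\ perfect matchings of the $pr$ ``legs'' in which every index is shared by precisely two of the sets $I_1,\ldots,I_r$ (no two legs of the same $I_\ell$ paired). Unlike in Proposition \ref{Pro:moment CLT}, the connected matchings (those not reducible to whole-set pairings $\cl{C}(N_n)$) are no longer negligible.

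For these configurations I would compute $\prod_i\E[Z_i^2]=Q^{-pr/2}$ (as $\E Z^2=1/Q$ by \eqref{eq:rho_0 moments}), note that the number of realizations of a fixed matching is $\sim\binom{N_n}{pr/2}\sim (Qw_n)^{pr/2}/(pr/2)!$, and apply Lemma \ref{Lem:moments LRD} with each pair contributing $h_2^{(\beta)}(x_a,x_b)=\Gamma(\beta)\Gamma(2-\beta)|x_a-x_b|^{\beta-1}$. The powers of $Q$ and $w_n$ cancel, and using $N_n\sim Qw_n$ a.s.\ I obtain, a.s.,
\[
\E_N[S_n^*(t_1)\cdots S_n^*(t_r)]\longrightarrow \mu^p(f)^r\sum_{\pi}\int_{(\mbf{0},\mbf{t})}\prod_{\{a,b\}\in\pi} \Gamma(\beta)\Gamma(2-\beta)\,|x_a-x_b|^{\beta-1}\,d\mbf{x},
\]
the sum running over the flat-edge-free perfect matchings $\pi$. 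The negligibility of the $m<pr/2$ terms would follow as in Proposition \ref{Pro:moment CLT} by bounding $\E(\prod_\ell|L_{n,I_\ell,t_\ell}^*|)$ uniformly (replacing $f$ by $c\,1_{A^p}$ and invoking Lemma \ref{Lem:moments LRD} for the absolute version), the $O(w_n^m)$ configurations being beaten by the normalization $w_n^{-pr/2}$.

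The main obstacle is identifying the displayed limit with $\E[H_{p,\beta}(t_1)\cdots H_{p,\beta}(t_r)]$. For this I would invoke the diagram (Wick) formula for moments of multiple Wiener--It\^o integrals: writing $Z_{p,\beta}(t)=a_{p,\beta}I_p(g_t)$ with $g_t$ the kernel in \eqref{eq:Herm proc}, the moment $\E[\prod_\ell I_p(g_{t_\ell})]$ is a sum over exactly the same flat-edge-free matchings, each contributing $\int_{(\mbf{0},\mbf{t})}\prod_{\{a,b\}}\big(\int_{\bb{R}}(s_a-x)_+^{\beta/2-1}(s_b-x)_+^{\beta/2-1}dx\big)d\mbf{s}$, and the inner contraction equals $\mathrm{B}(\beta/2,1-\beta)|s_a-s_b|^{\beta-1}$. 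Matching the two expressions term by term identifies my limit with the moments of a fixed constant multiple $H_{p,\beta}$ of $Z_{p,\beta}$, the constant absorbing $\mu^p(f)$, $a_{p,\beta}$ and the ratio $\Gamma(\beta)\Gamma(2-\beta)/\mathrm{B}(\beta/2,1-\beta)$; that $H_{p,\beta}$ is moment-determinate for $p=1,2$ is what later permits the passage to weak convergence. Finally, the uniform integrability would be obtained, as in Proposition \ref{Pro:moment CLT}, by bounding $\E_N[S_n^*(t_1)^2\cdots S_n^*(t_r)^2]$ through the same configuration analysis with $2r$ factors (dominant union size $m=pr$) and taking expectations of the factorial moments of $N_n$.
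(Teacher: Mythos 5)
Your proposal is correct and follows essentially the same route as the paper's proof: conditioning on $N_n$, using the symmetry of $Z_i$ to reduce to configurations with even $|\cl{I}(i)|$, showing via Lemma \ref{Lem:moments LRD} that terms with $m<pr/2$ used indices are $O(w_n^{m-pr/2})$ and negligible, extracting the leading term from the perfect-matching configurations with each pair contributing $h_2^{(\beta)}$, and proving uniform integrability through $\E_N[S_n^*(t_1)^2\cdots S_n^*(t_r)^2]$ and the factorial moments of $N_n$. The only cosmetic difference is that you derive the Hermite joint-moment formula directly via the diagram formula and the contraction $\int_{\bb{R}}(s_a-x)_+^{\beta/2-1}(s_b-x)_+^{\beta/2-1}dx=\mathrm{B}(\beta/2,1-\beta)|s_a-s_b|^{\beta-1}$, where the paper simply cites \cite[Remark 4.4.2]{pipiras:2017:long} for the same identity.
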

\begin{proof}
Let $E_N$ denote the conditional expectation given $N$.
We have
\begin{equation}\label{eq:moment S_n}
\E_N [S_n^*(t_1)\ldots S_n^*(t_r)]=(p!)^r w_n^{-rp/2}\sum_{I_1,\ldots,I_r\in\cl{D}_p(N_n)} \E \left[ \left(\prod_{i\in I_1} Z_i\right)\ldots  \left(\prod_{i\in I_r} Z_i\right)\right]\E \left(  \prod_{\ell =1}^r L_{n,I_\ell,t_{\ell}}^*\right).
\end{equation}
Assume without loss of generality that  $N_n\ge pr/2$. 
Similarly as the arguments below \eqref{eq:moment S_n CLT}, we can assume that $pr$ is even. Recalling $\cl{M}(m)$ in \eqref{eq:M(m)} and $\cl{N}(m)$ in \eqref{eq:N(m)}, we have
\begin{align}\label{eq:moment S_n cond}
\E_N [S_n^*(t_1)\ldots S_n^*(t_r)]&= (p!)^r w_n^{-pr/2}\sum_{(I_1,\ldots,I_r)\in\cl{M}(N_n)} \E\left[\left(\prod_{i\in I_1} Z_i\right)\ldots  \left(\prod_{i\in I_r} Z_i\right)\right]\E\left(  \prod_{\ell =1}^r L_{n,I_\ell,t_{\ell}}^*\right)\notag\\
&=
\sum_{m=p}^{pr/2}  (p!)^r w_n^{-pr/2} {N_n\choose m}\sum_{(I_1,\ldots,I_r)\in\cl{N}(m)} \E\left[ \left(\prod_{i\in I_1} Z_i\right)\ldots  \left(\prod_{i\in I_r} Z_i\right)\right]\E\left(  \prod_{\ell =1}^r L_{n,I_\ell,t_{\ell}}^*\right)\notag\\&=:\sum_{m=p}^{pr/2} T_{m}(n),
\end{align}
where  the second equality above follows from an argument similar to  the one leading to \eqref{eq:B(n) bound}.  

We first show that 
\[
\lim_n T_m(n)= 0 \quad \text{ for }  m=p,\ldots ,pr/2-1.
\]
 For this purpose, note first  that $\E [ \prod_{\ell =1}^r L_{n,I_\ell,t_{\ell}}^* ]$ are uniformly bounded with respect to $0\le t_\ell \le 1$ and $I_\ell$, $\ell=1,\ldots,r$. Indeed, this can be seen by bounding $|f|$ with a constant multiple of $1_{A^p}$, replacing $t_\ell$'s by $1$, and applying Lemma \ref{Lem:moments LRD} with $f=1_{A^p}$. In addition, $\E\left( (\prod_{i\in I_1} Z_i)\ldots (\prod_{i\in I_r} Z_i)\right)$ are also uniformly bounded in view of \eqref{eq:holder}.  Therefore,   
\begin{align}\label{eq:bound lower order}
T_m(n)\le c w_n^{-pr/2} {N_n \choose m},  \quad m=p,\ldots,pr/2.
\end{align}
The right-hand side of \eqref{eq:bound lower order}  tends to $0$  a.s.\ if $m<pr/2$
since $N_n/w_n\rightarrow Q $ a.s..  

Now we treat the leading term $m=pr/2$.  Note that   a configuration $(I_1,\ldots,I_r)$ belongs to $\cl{N}(pr/2)$ if and only if  $|\cl{I}(i)|=2$ (recall \eqref{eq:cl I} and \eqref{eq:N(m)}) for all $i=1,\ldots,pr/2$.  Hence by Lemma \ref{Lem:moments LRD} and \eqref{eq:rho_0 moments},
\begin{align*}
T_{pr/2}(n)&=(p!)^r w_n^{-pr/2}{N_n\choose pr/2} \sum_{(I_1,\ldots,I_r)\in  \cl{N}(pr/2)} \E\left[ \left(\prod_{i\in I_1} Z_i\right)\ldots  \left(\prod_{i\in I_r} Z_i\right)\right]\E\left(  \prod_{\ell =1}^r L_{n,I_\ell,t_{\ell}}^*\right)\\
&\rightarrow \frac{(p!)^r Q^{pr/2}}{(pr/2)!}  \mu^{p}(f)^{r}  \sum_{(I_1,\ldots,I_r)\in \cl{N}(pr/2)} \int_{(\mbf{0},\mbf {t})} \prod_{i=1}^{pr/2}  h_{2}^{(\beta)} (\mbf{x}_{\cl{I}(i)})  d\mbf{x} \quad   a.s.,
\end{align*}
We claim that the summation above equals
\[
2^{-pr/2}  [
 \Gamma(\beta)\Gamma(2-\beta)]^{pr/2}\sum \int_0^{t_1}ds_1\ldots\int_0^{t_r}ds_r |s_{u(1)}-s_{v(1)}|^{\beta-1}\ldots |s_{u(pr/2)}-s_{v(pr/2)}|^{\beta-1},
\]
where the sum above is over all indices $u(1),v(1),\ldots,u(pr/2),v(pr/2)\in \{1,\ldots,r\}$ such that $u(1)\neq v(1)$,\ldots, $u(pr/2)\neq v(pr/2)$ and each number $1,\ldots,r$ appears exactly $p$ times in $u(1),v(1),\ldots,u(pr/2),v(pr/2)$. To see this,   write $\cl{I}(i)=\{u(i),v(i)\}$, $i=1,\ldots,pr/2$,  and note that the factor $2^{-pr/2} $ above accounts for the ignorance of the order within each  pair $(u(i),v(i))$ in $\cl{I}(i)$.

Combining these above and returning   to \eqref{eq:moment S_n cond}, we get as $n\rightarrow\infty$ that
\begin{align*}
&\E_N [ S_n^*(t_1)\ldots S_n^*(t_r)]\\\rightarrow &\frac{ [\mu^p(f)   p!  ]^r [
 \Gamma(\beta)\Gamma(2-\beta)]^{pr/2}}{2^{pr/2} (pr/2)! }\sum \int_0^{t_1}ds_1\ldots\int_0^{t_r}ds_r |s_{u(1)}-s_{v(1)}|^{\beta-1}\ldots |s_{u(pr/2)}-s_{v(pr/2)}|^{\beta-1}.
\end{align*}
This is the joint $r$-th moment formula for a  (non-standardized) Hermite process \cite[Remark 4.4.2]{pipiras:2017:long} ($p$ there corresponds to    $r$ here,  and   $k$ there correspond to $p$ here.)

Now we are left to take another expectation  in \eqref{eq:moment S_n}. The conclusion will follow if the uniform integrability of \[(\E_N [S_n^*(t_1)\ldots S_n^*(t_r)],\ n\ge 1)\] holds. To show this, we consider the boundedness of
\[
\E \left[ \left(\E_N [S_n^*(t_1)\ldots S_n^*(t_r)]\right)^2\right]\le \E  \left[\E_N[ S_n^*(t_1)^2\ldots S_n^*(t_r)^2]\right].
\]
In view of \eqref{eq:moment S_n cond} and \eqref{eq:bound lower order},   we have
\begin{align*}
\E_N[ S_n^*(t_1)^2\ldots S_n^*(t_r)^2]&\le  c  w_n^{-2q} \sum_{m=p}^{pr} {N_n\choose m}.
\end{align*}
Using the fact $\E_N [N_n(N_n-1)(N_n-m+1)]=Q^m w_n^m$,
we see that 
$\E\left[ \E_N [S_n^*(t_1)\ldots S_n^*(t_r)]^2\right]$ is bounded.

\end{proof}

\begin{proof}[Proof of Theorem \ref{Thm:nCLT}]
Note that the  normalization $(a_n)$ chosen in \eqref{eq:a_n} ensures that the limit  has variance $\mu^p(f)$ at $t=1$, which can be verified with     Corollary \ref{Cor:cov}  and \cite[Proposition 2.2.5]{pipiras:2017:long}. Note also that in view of the relation between $(b_n)$ and $(w_n)$ in \eqref{eq:b_n w_n}, the normalization $\left(\frac{w_n^{p/2} n}{ b_n^p }\right)$ in Proposition \ref{Pro:moment nCLT} and the normalization $(a_n)$ in \eqref{eq:a_n} are asymptotically equivalent up to a constant.  
So in the case $\rho=\rho_0$ as in Lemma \ref{Lem:reduction}, the convergence of finite-dimensional distributions follows from Proposition \ref{Pro:moment nCLT},  the moment determinancy of a distribution living in the first and second Wiener chaos \cite{slud:1993:moment},  and the fact that the multidimensional moment determinacy follows from the moment determinacy of the marginals  (\cite[Theorem 14.6]{schmudgen:2017:moment}, see also the proof of Theorem \ref{Thm:CLT} above).  The extension from $\rho_0$ to general $\rho$ follows from Lemma \ref{Lem:reduction}.  
 Tightness in $D[0,1]$ is  a routine result in this long-range dependence  regime (Corollary \ref{Cor:cov}, \cite[Proposition 2.2.5]{pipiras:2017:long}  and \cite[Proposition 4.4.2]{giraitis:koul:surgailis:2009:large}).
 
\end{proof}

 \bigskip
 \noindent \textbf{Acknowledgment}. The author would like to thank Takashi Owada and Yizao Wang for helpful discussions.  The author would like to thank the anonymous referees for their careful reading and helpful suggestions which have lead to substantial improvements of the paper.
 
 \bigskip 
 
\noindent 
Shuyang Bai\\
Department of Statistics\\ 
University of Georgia\\
310 Herty Drive, \\
Athens, GA, 30602, USA. \\
{bsy9142@uga.edu}

\bibliographystyle{abbrvnat}
\bibliography{Bib}
\end{document}